\newtheorem{thm}{Theorem}
\newtheorem{lem}[thm]{Lemma}
\newtheorem{cor}[thm]{Corollary}
\theoremstyle{definition}
\newtheorem{con}{Conjecture}
\theoremstyle{remark}
\newtheorem{case}{Case}
\newcommand*{\lemma}[1]{Lemma~\ref{#1}}
\newcommand*{\theorem}[1]{Theorem~\ref{#1}}
\newcommand*{\conj}[1]{Conjecture~\ref{#1}}
\newcommand*{\sect}[1]{Section~\ref{sec:#1}}
\newcommand*{\prob}[1]{\mathbb{P}(#1)}
\newcommand*{\probb}[1]{\mathbb{P}\bigl(#1\bigr)}
\newcommand*{\probx}[1]{\mathbb{P}\Bigl(#1\Bigr)}
\newcommand*{\cmp}[1]{#1^{\scriptscriptstyle\complement}}
\newcommand*{\modn}[1][n]{mod\;$#1$\xspace}
\newcommand*{\abs}[1]{\lvert#1\rvert}
\newcommand*{\floor}[1]{\lfloor#1\rfloor}
\newcommand{\ie}{i.e.\ }
\newcommand{\eg}{e.g.\ }
\author{John Haslegrave}
\title{Proof of a local antimagic conjecture}
\affiliation{
  Mathematics Institute\\
  University of Warwick\\
  Coventry, UK}
\keywords{antimagic labelling, local antimagic labelling, neighbour-sum distinguishing edge weights}
\begin{document}
\publicationdetails{20}{2018}{1}{18}{3887}

\maketitle
\begin{abstract}
An \textit{antimagic labelling} of a graph $G$ is a bijection $f:E(G)\to\{1,\ldots,\lvert E(G)\rvert\}$ such that the sums $S_v=\sum_{e\ni v}f(e)$ distinguish all vertices $v$. A well-known conjecture of Hartsfield and Ringel (1994) is that every connected graph other than $K_2$ admits an antimagic labelling. Recently, two sets of authors (Arumugam, Premalatha, Ba\v ca \& Semani\v cov\'a-Fe\v nov\v c\'ikov\'a (2017) and  Bensmail, Senhaji \& Szabo Lyngsie (2017)) independently introduced the weaker notion of a \textit{local antimagic labelling}, where only adjacent vertices must be distinguished. Both sets of authors conjectured that any connected graph other than $K_2$ admits a local antimagic labelling. We prove this latter conjecture using the probabilistic method. Thus the parameter of local antimagic chromatic number, introduced by Arumugam et al., is well-defined for every connected graph other than $K_2$.
\end{abstract}

\section{Introduction}
Fix a graph $G$ with $m$ edges and assign labels from $\{1,\ldots,m\}$ to the edges of $G$, with each label appearing exactly once. For a vertex $v$, let $S_v$ be the sum of the labels of edges meeting $v$. If $S_v\neq S_w$ for every pair of distinct vertices $v,w$, then the labelling is an \textit{antimagic labelling} of $G$. If such a labelling exists, $G$ is said to be \textit{antimagic}. Clearly $K_2$ is not antimagic, since the two vertices have the same set of incident edges (and the same argument applies to any graph which has $K_2$ as a component). Hartsfield and Ringel introduced antimagic labellings in \cite{HR94}, and conjectured that every connected graph other than $K_2$ is antimagic. Since then the conjecture has received much attention, and has been proved for several special classes of graphs. Perhaps the most striking result to date is the proof by Cranston, Liang and Zhu (\cite{CLZ}) for regular graphs of odd degree, which was subsequently adapted to regular graphs of even degree by B\'erczi, Bern\'ath and Vizer (\cite{BBV}) and by Chang, Liang, Pan and Zhu (\cite{CLPZ}). Notably, however, the conjecture is still unsolved even for some particularly simple natural graph classes such as trees. For further details of known classes of antimagic graphs, see Section~6 of \cite{Gal97}.

Recently, two groups of authors independently considered the weaker condition of neighbour-distin\-guishing, rather than vertex-distinguishing, labellings. A labelling using every label from $\{1,\ldots,m\}$ exactly once is a \textit{local antimagic labelling} if the sums $S_v$ and $S_w$ defined above are distinct for every pair of neighbouring vertices $v,w$.

Arumugam, Premalatha, Ba\v ca and Semani\v cov\'a-Fe\v nov\v c\'ikov\'a were motivated by using the sums $S_v$ to define a proper colouring of $G$, and in \cite{APBS} introduced the \textit{local antimagic chromatic number}, $\chi_{la}(G)$, of a graph $G$ as the minimum number of colours among all proper colourings of $G$ which arise in this way. Bensmail, Senhaji and Szabo Lyngsie (\cite{BSSL}) were motivated by the 1-2-3 conjecture, posed in \cite{KLT} by Karo\'nski, \L uczak and Thomason. This states that if $G$ is any graph with no isolated edge (that is, no component isomorphic to $K_2$), then the edges of $G$ may be labelled, using labels from $\{1,2,3\}$ without restriction on the number or proximity of occurrences of a given label, in such a way that the vertex sums distinguish neighbouring vertices. For a survey of results in this direction, see \cite{Sea12}. Of course, for any constant number of permitted labels, distinguishing vertices in some local sense is the best we can hope for, since if $G$ has sufficiently many low-degree vertices then any labelling will fail to distinguish some two of them by the pigeonhole principle. For example, if the edges of $C_6$ are labelled from $\{1,2,3\}$, there are only $5$ possible vertex sums and so not all vertices will be distinguished; however, labelling the edges $1,2,3,1,2,3$ distinguishes all neighbours. Increasing the number of permitted labels and requiring all vertices to be distinguished gives the concept of \textit{irregularity strength}, introduced in  \cite{irregular} by Chartrand, Jacobson, Lehel, Oellermann, Ruiz and Saba; the irregularity strength of a graph $G$ is the smallest positive integer $k$ for which there is a labelling of the edges of $G$ with labels from $\{1,\ldots,k\}$ where every vertex has a different sum.

Neighbour-distinguishing labellings have also been considered in the context of proper edge-labellings, \ie any two edges which share a vertex must be given different labels. Zhang, Liu and Wang (\cite{ZLW}) conjectured that this can be done using only $\Delta(G)+2$ different (not necessarily consecutive) labels for any connected graph $G$ with at least $6$ vertices. A bound of $\Delta(G)+O(1)$ has been given in \cite{Hat05}. This local restriction on labels may be thought of as intermediate between 1-2-3-type problems, where there is no restriction on how labels may be assigned, and antimagic-type problems, where there is a global restriction that no two edges may share a label.

\cite{APBS} and \cite{BSSL} made almost identical conjectures on local antimagic labellings, although Arumugam et al.\ considered only connected graphs. The slightly stronger form of the conjecture, given by Bensmail et al., is that every graph without isolated edges has a local antimagic labelling. Bensmail et al.\ proved that the conjecture is true for trees. 

We prove the stronger form of the conjecture. Since $\chi_{la}(G)$ is well-defined precisely when $G$ admits a local antimagic labelling, our main result shows that $\chi_{la}(G)$ exists for every graph $G$ with no isolated edge. 

Our approach is to show that assigning the numbers $1,\ldots,m$ to the edges of $G$ according to a random permutation distinguishes all neighbours with positive probability. Although our proof is not constructive, and so does not give a deterministic algorithm for finding a local antimagic labelling, we give bounds on this probability in terms of $m$, which allow us to conclude that the simple algorithm of generating a random permutation, checking whether it yields a local antimagic labelling, and repeating until one is found, has expected runtime at most quadratic in $m$.

Our methods actually prove the stronger result that for every pair of neighbours the probability that a random permutation fails to distinguish them is at most $1/m$, and that equality is only possible in one exceptional small case. The result follows immediately by the union bound. 

\section{Parity of random subsets}
We will need the following preparatory lemma bounding the discrepancy between the probability that the sum of a random subset of fixed size is odd and the probability that it is even. Here, and throughout the paper, we use the notation $[n]:=\{1,\ldots,n\}$.
\begin{lem}\label{parity}Suppose $0<c<n$. Let $X$ be chosen uniformly at random from all $c$-element subsets of $[n]$. Then, for each $i\in\{0,1\}$,
\[\probx{\sum_{x\in X}x\equiv i\mod2}\leq\begin{cases}\frac12\quad&n\text{ even\,, }c\text{ odd}\,;\\
\frac12\Bigl(1+\frac{1}{n-1}\Bigr)\quad&n\text{ even\,, }c\text{ even}\,;\\
\frac12\Bigl(1+\frac{1}{n}\Bigr)\quad&n\text{ odd}\,.\end{cases}\]\end{lem}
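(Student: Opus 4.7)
The plan is to write the probability in terms of the signed count $D_c := E_c - O_c$, where $E_c$ and $O_c$ are the numbers of $c$-subsets of $[n]$ whose element sum is even and odd respectively. Since $\max(E_c,O_c) = \tfrac12(\binom{n}{c} + |D_c|)$, the three cases of the lemma reduce to showing $D_c = 0$, $|D_c|/\binom{n}{c}\le 1/(n-1)$, and $|D_c|/\binom{n}{c}\le 1/n$ respectively. To compute $D_c$ I would introduce the generating function $F(x,z)=\prod_{k=1}^n(1+xz^k)$, in which the coefficient of $x^c z^s$ counts $c$-subsets of sum $s$; evaluating at $z=-1$ collapses $s$ to its parity and gives $F(x,-1) = \prod_{k=1}^n(1+(-1)^k x)$, which factors as $(1-x^2)^{n/2}$ for even $n$ and as $(1-x)(1-x^2)^{(n-1)/2}$ for odd $n$. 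Extracting the coefficient of $x^c$ immediately yields $D_c = 0$ when $n$ is even and $c$ is odd (handling the first case), and in the remaining cases gives $|D_c|=\binom{n/2}{c/2}$ or $|D_c|=\binom{(n-1)/2}{\lfloor c/2\rfloor}$.

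What remains is to prove the two combinatorial inequalities $(n-1)\binom{n/2}{c/2}\le\binom{n}{c}$ (even $n$, even $0<c<n$) and $n\binom{(n-1)/2}{\lfloor c/2\rfloor}\le\binom{n}{c}$ (odd $n$, $0<c<n$). In each case I would set $r_c:=\binom{n}{c}/|D_c|$ and analyse the sequence via its consecutive ratio. In the odd case a clean simplification appears: a direct calculation gives $r_{c+1}/r_c=(n-c)/(c+1)$ when $c$ is even but $r_{c+1}/r_c=1$ when $c$ is odd (a consequence of the floor in the index of $|D_c|$), so the values pair up as $r_1=r_2$, $r_3=r_4$, and so on. Combined with the symmetry $r_c=r_{n-c}$, this shows that $r_c$ is unimodal with minimum at the boundary, and a direct check gives $r_1=r_2=n$. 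The even-even case is handled similarly with the ratio $r_{c+2}/r_c=(n-c-1)/(c+1)$, yielding a unimodal symmetric sequence with boundary value $r_2=n-1$.

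The main obstacle is case management rather than any single difficult step: the generating-function computation and the ratio analysis are each short, but handling the three parity combinations uniformly requires keeping careful track of the indices of the binomial coefficients on the denominator side (especially the role of the floor when $n$ is odd), and one must verify that equality is achieved at the boundary values of $c$ in order to obtain the precise constants $1/(n-1)$ and $1/n$ rather than a weaker bound.
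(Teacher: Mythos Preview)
Your proposal is correct. The overall architecture matches the paper's exactly: both reduce the three cases to the inequalities $\binom{n}{c}\big/\binom{n/2}{c/2}\ge n-1$ (even $n$, even $c$) and $\binom{n}{c}\big/\binom{(n-1)/2}{\lfloor c/2\rfloor}\ge n$ (odd $n$), and both establish these by a consecutive-ratio/induction argument that identifies the boundary values $c=2$ and $c=1$ as the extremal cases.

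The one genuine difference is in how $|D_c|$ is obtained. The paper constructs an explicit parity-switching involution on $c$-subsets (pair $[n]$ as $(1,2),(3,4),\ldots$ and swap within the first pair that $Y$ meets in exactly one element); the fixed points are precisely the subsets built from whole pairs, giving $f(n,c)=\binom{\lfloor n/2\rfloor}{\lfloor c/2\rfloor}$ directly and making the $n$ even, $c$ odd case immediate. Your generating-function evaluation $F(x,-1)=(1-x^2)^{\lfloor n/2\rfloor}(1-x)^{n\bmod 2}$ computes the same quantity algebraically and is arguably cleaner for reading off all three cases at once, while the paper's bijection is more self-contained combinatorially. Neither buys a real advantage over the other; they are two standard routes to the same formula for $E_c-O_c$.
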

\begin{proof}Divide the elements of $[n]$ into $\floor{n/2}$ pairs of the form $(2i-1,2i)$. For each $c$-element subset $Y$, define $Y^*$ to be the subset given by finding the first pair $P$ such that $\abs{Y\cap P}=1$ and replacing the element of $P$ which is in $Y$ with the other one. If no such pair exists, set $Y^*=Y$. Clearly $(Y^*)^*=Y$, and if $Y^*\neq Y$ then $Y$ and $Y^*$ have opposite-parity sums. So of the subsets $Y$ with $Y^*\neq Y$, exactly half have the desired parity. Write $f(n,c)$ for the number of subsets $Y$ with $Y^*=Y$. Then we have
\begin{align*}\probx{\sum_{x\in X}x\equiv i\mod2}&\leq\frac{\frac12\Bigl(\binom{n}{c}-f(n,c)\Bigr)+f(n,c)}{\binom{n}{c}}\\
&=\frac12\biggl(1+f(n,c)\Bigm/\binom{n}{c}\biggr)\,.\end{align*}

Any $Y$ with $Y^*=Y$ must consist of $\floor{c/2}$ pairs, together with the unpaired element if $c$ is odd. Since there is no unpaired element if $n$ is even, whenever $n$ is even and $c$ odd we have $f(n,c)=0$.

If $n$ and $c$ are both even, we have $f(n,c)=\binom {n/2}{c/2}$. It suffices to show that 
\begin{equation}\binom{n}{c}\Bigm/\binom{n/2}{c/2}\geq n-1\,,\label{eveneven}\end{equation}
and by symmetry of binomial coefficients we may assume $c\leq n/2$. \eqref{eveneven} is an equality when $c=2$, and if $2<c\leq n/2$ then
\[\frac{\binom{n}{c}/\binom{n/2}{c/2}}{\binom{n}{c-2}/\binom{n/2}{c/2-1}}=\frac{n-c+1}{c-1}>1\,,\]
and so, by induction, the result follows for all $c$ in this range.

If $n$ is odd then, by considering the complement $\cmp X$ (and swapping the required parity if $\binom{n+1}2$ is odd), it suffices to consider the case where $c$ is also odd. Now we have $f(n,c)=\binom{(n-1)/2}{(c-1)/2}$, and so need to show that $\binom{n}{c}/\binom{(n-1)/2}{(c-1)/2}\geq n$. This is an equality when $c=1$. If $c>1$ then $\binom{n}{c}=\frac{n}{c}\binom{m-1}{c-1}\geq\frac n{n-2}\binom{n-1}{c-1}$ and by \eqref{eveneven} we have $\binom{n-1}{c-1}/\binom{(n-1)/2}{(c-1)/2}\geq n-2$, giving the result.
\end{proof}

\section{Bounds for individual edges}
Fix a graph $G$ (with no isolated edge), and write $m=\abs{E(G)}$. We use a random permutation of $[m]$ to label the edges of $G$, and then for each vertex $x$ define $S_x$ to be the sum of labels of edges meeting $x$.

The aim of this section is to show that for each edge $xy$ we have $\prob{S_x=S_y}\leq\frac 1m$, and that the inequality is strict except when $m=5$ and $d(x)=d(y)=3$. We deal with three types of edge separately: edges where the vertex degrees differ by $1$ or $2$; edges where the vertex degrees are equal; and edges where the vertex degrees differ by more than $2$. A different method is needed in each case. Although the case of a difference of $1$ or $2$ would naturally be the middle case, we consider that case first as it is the simplest and some results from it are used in later cases.

Rather than thinking directly about labellings of $G$, we consider the following situation. Let $\sigma$ be a random permutation of $[n]$, and let $a,b$ be fixed positive integers with $a+b<n$. Define \[D_{a,b}(\sigma):=\sum_{i=1}^a\sigma(i)-\sum_{i=a+1}^{a+b}\sigma(i)\,.\] 
Let $t$ be a fixed target. (Often, but not always, we have $t=0$.) Then we will bound the probability
\[p_{n;a,b}(t):=\prob{D_{a,b}(\sigma)=t}\,.\]

The purpose of this definition is that $S_x-S_y$ has the same distribution as $D_{a,b}(\sigma)$, where $n=m$, $a=d(x)-1$ and $b=d(y)-1$. The first $a$ values of $\sigma$ correspond to the labels of the edges meeting $x$ but not $y$, and the next $b$ correspond to the labels of edges meeting $y$ but not $x$. The label on the edge $xy$ does not affect $S_x-S_y$, since it contributes the same amount to each sum. 

Throughout this section we therefore assume that $a$ and $b$ are not both zero, since that would correspond to the case $d(x)=d(y)=1$, which is not permitted as $G$ has no isolated edges, and further that they are both positive (since if $x$ and $y$ are adjacent and exactly one has degree $1$ then no labelling can fail to distinguish them). Also we assume that $a+b<n$, since $d(x)+d(y)\leq m+1$.

\subsection{Degrees differing by $1$ or $2$}
In this section we prove results about $D_{a,b}(\sigma)$ \modn, which will be useful later. First we consider values differing by $1$.
\begin{lem}\label{diffone}For any $t$, $\prob{D_{a,a-1}(\sigma)\equiv t\mod n}=\frac1n$.\end{lem}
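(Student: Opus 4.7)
The plan is to exploit a cyclic-shift symmetry on $\mathbb{Z}/n\mathbb{Z}$, which works precisely because the difference between the sizes of the two sums in $D_{a,a-1}$ is $1$.

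First I would let $\sigma$ be a uniformly random permutation of $[n]$ and, for each integer $k$, define $\pi_k$ to be the cyclic shift of $[n]$ that sends $j$ to the unique element of $[n]$ congruent to $j+k$ modulo $n$. Since $\pi_k$ is a bijection of $[n]$, the composition $\pi_k\circ\sigma$ is again a uniformly random permutation of $[n]$, so $D_{a,a-1}(\pi_k\circ\sigma)$ and $D_{a,a-1}(\sigma)$ are identically distributed.

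Next, I would compute the effect of the shift modulo $n$. Since $\pi_k(\sigma(i))\equiv\sigma(i)+k\pmod n$ for every $i$, we get
\[D_{a,a-1}(\pi_k\circ\sigma)\equiv D_{a,a-1}(\sigma)+ak-(a-1)k=D_{a,a-1}(\sigma)+k\pmod n\,.\]
Combining this with the previous paragraph shows that the distribution of $D_{a,a-1}(\sigma)\bmod n$ is invariant under adding any fixed $k\in\mathbb{Z}/n\mathbb{Z}$, and the only such distribution on $\mathbb{Z}/n\mathbb{Z}$ is the uniform one, giving $\prob{D_{a,a-1}(\sigma)\equiv t\pmod n}=1/n$ for every $t$.

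There is not really a substantive obstacle here; the small bookkeeping point is just to confirm that composing with $\pi_k$ genuinely preserves the uniform measure on $S_n$, which is immediate because $\sigma\mapsto\pi_k\circ\sigma$ is itself a bijection on $S_n$. The essential observation driving the argument is that the coefficient of $k$ in the shifted expression is $a-(a-1)=1$, which is coprime to $n$ (trivially, since it equals $1$); this is what lets a single symmetry class sweep out all of $\mathbb{Z}/n\mathbb{Z}$ and force the uniform distribution.
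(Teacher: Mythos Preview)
Your proof is correct and is essentially the same argument as the paper's: both exploit the cyclic shift $j\mapsto j+r\pmod n$ on values, noting that it changes $D_{a,a-1}$ by exactly $r$ modulo $n$ because the coefficient $a-(a-1)=1$, and hence the residue of $D_{a,a-1}(\sigma)$ is uniform. The only cosmetic difference is that the paper generates $\sigma$ as a random shift of a random $\tau$ and conditions on $\tau$, whereas you phrase it as shift-invariance of the law of $D_{a,a-1}(\sigma)\bmod n$.
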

\begin{proof}First choose a random permutation $\tau$, and a random number $r\in[n]$, and then let $\sigma$ be the permutation with $\sigma(i)\equiv\tau(i)+r$ \modn for every $i$. This process still produces a uniformly random $\sigma$. For fixed $\tau$, $D_{a,a-1}(\sigma)\equiv D_{a,a-1}(\tau)+r$ \modn, and so $\prob{D_{a,a-1}(\sigma)\equiv t\mid \tau}=\frac1{n}$; the result follows.\end{proof}

We prove a similar result for values differing by $2$.
\begin{lem}\label{difftwo}For any $t$, $\prob{D_{a,a-2}(\sigma)\equiv t\mod n}\leq\frac1{n-1}$.\end{lem}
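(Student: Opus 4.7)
The plan is to mimic the random-shift argument used for \lemma{diffone}, but account for the fact that shifting each value by $r$ mod $n$ now shifts $D_{a,a-2}(\sigma)$ by $2r$ mod $n$, not by $r$. First I would couple $\sigma$ to an arbitrary permutation $\tau$ via $\sigma(i)\equiv\tau(i)+r\pmod n$ (where $r$ is uniform on $[n]$, chosen independently of $\tau$) so that $\sigma$ is still uniform and $D_{a,a-2}(\sigma)\equiv D_{a,a-2}(\tau)+2r\pmod n$.

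When $n$ is odd the map $r\mapsto 2r$ is a bijection on $\mathbb Z/n\mathbb Z$, so conditional on $\tau$ the value of $D_{a,a-2}(\sigma)$ mod $n$ is uniform, giving probability exactly $1/n\leq 1/(n-1)$. The interesting case is $n$ even. Then $2r$ ranges over the $n/2$ even residues mod $n$, hitting each one exactly twice, so
\[\prob{D_{a,a-2}(\sigma)\equiv t\pmod n\mid \tau}=\begin{cases}2/n&\text{if }D_{a,a-2}(\tau)\equiv t\pmod 2,\\ 0&\text{otherwise},\end{cases}\]
and therefore
\[\prob{D_{a,a-2}(\sigma)\equiv t\pmod n}=\frac2n\,\probb{D_{a,a-2}(\tau)\equiv t\pmod 2}.\]

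Now I would reduce to \lemma{parity}. Since $-x\equiv x\pmod 2$, we have $D_{a,a-2}(\tau)\equiv\sum_{i=1}^{2a-2}\tau(i)\pmod 2$, and this is the sum of a uniformly random $(2a-2)$-element subset of $[n]$. Because both $n$ and $c:=2a-2$ are even (and $0<c<n$ since we assume $a,b\geq 1$ and $a+b<n$, giving $a\geq 3$ and $c\geq 2$), \lemma{parity} yields
\[\probb{D_{a,a-2}(\tau)\equiv t\pmod 2}\leq\frac12\Bigl(1+\frac1{n-1}\Bigr)=\frac{n}{2(n-1)}.\]
Multiplying by $2/n$ gives the desired bound $1/(n-1)$.

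The only substantive obstacle is the even-$n$ case: the shift trick alone loses a factor of $2$ because $2r$ does not equidistribute mod $n$, and it is exactly the even-$n$, even-$c$ clause of \lemma{parity} (with its $1+\frac1{n-1}$ factor) that makes up the slack to recover $1/(n-1)$ rather than $1/n$. Verifying that $c$ falls in the permissible range and that the parity of the target $t$ is handled uniformly are the only bookkeeping items.
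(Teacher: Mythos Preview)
Your proposal is correct and follows essentially the same argument as the paper: the random additive shift $\sigma(i)\equiv\tau(i)+r\pmod n$, the observation that $D_{a,a-2}(\sigma)\equiv D_{a,a-2}(\tau)+2r\pmod n$, the immediate conclusion for $n$ odd, and for $n$ even the reduction to the parity of $\sum_{i=1}^{2a-2}\tau(i)$ together with the even--even case of \lemma{parity} to obtain $\frac{2}{n}\cdot\frac{n}{2(n-1)}=\frac{1}{n-1}$. The only cosmetic difference is that the paper states the parity condition directly on $\sum_{i=1}^{2a-2}\tau(i)$ rather than on $D_{a,a-2}(\tau)$, which you correctly note are the same modulo~$2$.
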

\begin{proof}As in the proof of \lemma{diffone}, we first choose a random permutation $\tau$ and a random $r\in[n]$, and define $\sigma(i)\equiv\tau(i)+r$ \modn. Now $D_{a,a-2}(\sigma)\equiv D_{a,a-2}(\tau)+2r$ \modn. If $n$ is odd, each choice of $r$ gives a different residue \modn, and so $\prob{D_{a,a-2}(\sigma)\equiv t\mod n}=\frac1n$. If $n$ is even then 
\[\prob{D_{a,a-2}(\sigma)\equiv t\mod n\mid\tau}=\begin{cases}\frac2n\quad&\text{if }\sum_{i=1}^{2a-2}\tau(i)\equiv t\mod 2\,;\\
0\quad&\text{otherwise.}\end{cases}\]
By \lemma{parity}, the first case applies with probability at most $\frac12\bigl(1+\frac1{n-1}\bigr)$ and so $\prob{D_{a,a-2}(\sigma)\equiv t\mod n}\leq\frac1n\cdot\frac n{n-1}$, as required.\end{proof}

We are now ready to prove the required bounds for edges of this type.
\begin{thm}\label{onetwo}$\prob{D_{a,a-1}(\sigma)=0}<\frac1n$ and $\prob{D_{a,a-2}(\sigma)=0}\leq\frac1{2(n-1)}$\end{thm}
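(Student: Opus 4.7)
The two bounds will be proved separately, in each case combining the relevant modular estimate from \lemma{diffone} or \lemma{difftwo} with an argument promoting ``$D\equiv 0\mod n$'' to the stronger assertion $D=0$.

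For the first inequality, \lemma{diffone} already gives $\prob{D_{a,a-1}(\sigma)\equiv 0\mod n}=1/n$ \emph{exactly}, so the strict bound $\prob{D_{a,a-1}(\sigma)=0}<1/n$ is equivalent to exhibiting a single permutation with $D_{a,a-1}(\sigma)$ a non-zero multiple of $n$. The standing constraints $a,b\geq 1$ and $a+b<n$ force $a\geq 2$ and $n\geq 2a\geq 4$. For $a=2$ the triple $(\sigma(1),\sigma(2),\sigma(3))=(n-1,2,1)$ gives $D_{a,a-1}=n$; for $a\geq 3$ I would take $\sigma(1)=n$ and fill positions $2,\ldots,2a-1$ from $\{1,\ldots,2a-2\}$ so that positions $2,\ldots,a$ and $a+1,\ldots,2a-1$ have equal sums, again yielding $D_{a,a-1}=n$. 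A balanced partition of $\{1,\ldots,2a-2\}$ into two equal-sum halves of size $a-1$ is elementary when the total sum is even, and a one-element swap (replacing $2a-2$ by $2a-1$, for instance) repairs the parity in the remaining case.

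For the second inequality, \lemma{difftwo} gives only $\prob{D_{a,a-2}(\sigma)\equiv 0\mod n}\leq 1/(n-1)$, and we need an extra factor of $\tfrac12$. The key structural observation is that $D_{a,a-2}(\sigma)$ has the same parity as $T(\sigma):=\sum_{i=1}^{2a-2}\sigma(i)$, so $D_{a,a-2}=0$ forces $T$ to be even; and by \lemma{parity} applied with $c=2a-2$ we have $\prob{T\text{ even}}\leq n/(2(n-1))$. The plan is to deduce the theorem from the sharper inequality $\prob{D_{a,a-2}(\sigma)=0}\leq\prob{T(\sigma)\text{ even}}/n$. To obtain this $1/n$ factor I would revisit the shift-orbit structure of \lemma{difftwo}'s proof: the cyclic shifts $\sigma\mapsto \sigma+r\mod n$ for $r\in[n]$ partition the permutations into orbits of size $n$ all sharing the same $T$-parity, and along each orbit $D_{a,a-2}$ traces a closed piecewise-linear path of slope $2$ punctuated by $2a-2$ jumps of size $\pm n$. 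At most two values of $r$ per orbit satisfy $D\equiv 0\mod n$, and the remaining task is to show that on average only one actually realises $D=0$.

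The principal technical obstacle is this last step---upgrading ``at most two zeros per orbit'' to ``at most one on average''. Two candidate shifts $r$ and $r+n/2$ (for $n$ even; analogously paired for $n$ odd, where $D=0$ in any case forces $D\equiv 0\mod 2n$) can both land on integer zeros of the piecewise-linear trajectory only when the wrap-around correction counts $\alpha_r=\epsilon^+(r)-\epsilon^-(r)$ satisfy a rigid identity between the two shifts. I expect the argument to split by that identity and then invoke \lemma{parity} once more to bound the exceptional configurations, so that the density of orbits containing two zeros is matched by an equally large density containing none.
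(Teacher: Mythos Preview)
Your treatment of the first inequality is essentially the paper's: from \lemma{diffone} the event $D_{a,a-1}(\sigma)\equiv 0\pmod n$ has probability exactly $1/n$, so one only needs a single permutation achieving a non-zero multiple of $n$. Your construction is a minor variant of the paper's (which also places the ``large'' value in one position and balances the remaining $2a-2$ entries via an explicit mod-$4$ pairing), and is fine modulo the routine check that a balanced partition of $\{1,\dots,2a-2\}$ (or of $\{1,\dots,2a-3,2a-1\}$ after your parity fix) into two $(a-1)$-sets of equal sum actually exists.

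For the second inequality your route diverges from the paper's, and the step you flag as the ``principal technical obstacle'' is a genuine gap. You aim for the sharper estimate $\prob{D_{a,a-2}(\sigma)=0}\le\prob{T\text{ even}}/n$ by analysing cyclic-shift orbits and arguing that, of the (at most) two shifts per orbit with $D\equiv 0\pmod n$, on average only one gives $D=0$. But this averaging claim is not proved: your description of the ``piecewise-linear path with $2a-2$ jumps'' and the pairing $r\leftrightarrow r+n/2$ does not by itself control how many of the two congruence-zero shifts land exactly at $0$, and the $n$-odd case (where there is only \emph{one} shift per orbit with $D\equiv 0\pmod n$) would require an entirely different mechanism, which your parenthetical remark about $D\equiv 0\pmod{2n}$ does not supply.

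The paper avoids all of this with a one-line involution. Define $\sigma^*$ by $\sigma^*(i)=n-\sigma(i)$ whenever $\sigma(i)\ne n$ and $\sigma^*(i)=n$ otherwise. If $D_{a,a-2}(\sigma)=0$ then a short case check (according to whether $n$ lies among the first $a$ positions, the next $a-2$, or outside $[2a-2]$) shows $D_{a,a-2}(\sigma^*)\in\{n,2n,3n\}$, hence $D_{a,a-2}(\sigma^*)\equiv 0\pmod n$ but $D_{a,a-2}(\sigma^*)\ne 0$. Since $\sigma\mapsto\sigma^*$ is injective, at most half of the permutations with $D_{a,a-2}\equiv 0\pmod n$ satisfy $D_{a,a-2}=0$, and \lemma{difftwo} finishes the job. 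This reflection-based pairing is precisely the missing idea that replaces your incomplete orbit-averaging argument.
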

\begin{proof}For the first statement we know from \lemma{diffone} that $\prob{D_{a,a-1}(\sigma)\equiv 0\mod n}=\frac1n$, and so it suffices to show that $\prob{D_{a,a-1}(\sigma)=n}>0$. If $2a-2$ is a multiple of $4$, we may construct a $\sigma$ with $D_{a,a-1}(\sigma)=n$ by setting $\sigma(1),\ldots,\sigma(a-1)$ to be the numbers in $[2a-2]$ congruent to $2$ or $3$ \modn[4], setting $\sigma(a+1),\ldots,\sigma(2a-1)$ to be the remaining numbers in $[2a-2]$, and setting $\sigma(a)=n$. If $2a-2$ is not a multiple of $4$, the construction is the same except $\sigma(a)=n-1$ (since $n>a+(a-1)$, $n-1\not\in[2a-2]$).

For the second statement, we know from \lemma{difftwo} that $\prob{D_{a,a-2}(\sigma)\equiv 0\mod n}\leq\frac1{n-1}$. If $\sigma$ is a permutation with $D_{a,a-2}(\sigma)=0$, consider the permutation $\sigma^*$ defined by $\sigma^*(i)=n-\sigma(i)$ if $\sigma(i)\neq n$, and $\sigma^*(i)=n$ if $\sigma(i)=n$. The mapping $\sigma\mapsto\sigma^*$ is injective, and 
\[D_{a,a-2}(\sigma^*)=\begin{cases}2n&\quad n\not\in[2a-2]\,;\\3n&\quad n\in[a]\,;\\n&\quad n\in\{a+1,\ldots,2a-2\}\,,\end{cases}\]
so $D_{a,a-2}(\sigma^*)\equiv 0$ \modn, but $D_{a,a-2}(\sigma^*)\neq 0$. Thus $\prob{D_{a,a-2}(\sigma)=0}\leq\frac12\prob{D_{a,a-2}(\sigma)\equiv 0\mod n}$, as required.\end{proof}

\subsection{Equal degrees}\label{sec:equal}
We first consider the case of equal-degree vertices which meet all edges.
\begin{lem}\label{basecase}We have $p_{2a+1;a,a}(t)\leq\frac 1{2a+1}$ for every $t$.\end{lem}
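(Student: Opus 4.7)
The plan is to reduce the lemma to a combinatorial inequality and then establish that inequality by an explicit injection.

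Write $n=2a+1$ and $S=n(n+1)/2$. Since $a+b=n-1$, the value $k=\sigma(2a+1)$ at the final position is the only one not appearing in $D_{a,a}(\sigma)$, so I would group permutations by the associated ordered triple $(A,k,B)$ with $A=\{\sigma(1),\ldots,\sigma(a)\}$ and $B=\{\sigma(a+1),\ldots,\sigma(2a)\}$. Each such triple arises from exactly $(a!)^2$ permutations, so $p_{n;a,a}(t)=(a!)^2|\mathcal T_t|/n!$, where $\mathcal T_t$ is the set of ordered partitions $(A,k,B)$ of $[n]$ into parts of sizes $a,1,a$ with $\sum A-\sum B=t$. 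Since $(a!)^2\binom{2a}{a}=n!/n$, the lemma becomes equivalent to the bound $|\mathcal T_t|\leq\binom{2a}{a}$.

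Combining $\sum A+\sum B+k=S$ with $\sum A-\sum B=t$ forces $k=S+t-2\sum A$, so each triple in $\mathcal T_t$ is determined by its first part. An $a$-subset $A\subseteq[n]$ extends to a triple in $\mathcal T_t$ exactly when $k_A:=S+t-2\sum A$ lies in $[n]\setminus A$; call such $A$ \emph{$t$-valid} and write $\mathcal V_t$ for the family of $t$-valid $a$-subsets. It then suffices to exhibit an injection from $\mathcal V_t$ into the family of $a$-subsets of $[2a]$, of which there are $\binom{2a}{a}$.

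The injection I would use is
\[\phi(A)=\begin{cases}A,&n\notin A,\\ (A\setminus\{n\})\cup\{k_A\},&n\in A.\end{cases}\]
In the second branch, $t$-validity forces $k_A\in[n]\setminus A$, and since $n\in A$ this gives $k_A\leq n-1$ and $k_A\notin A\setminus\{n\}$, so $\phi(A)$ is indeed an $a$-subset of $[2a]$. Each branch is individually invertible: in the second, eliminating $\sum A$ from $\sum A=\sum A^*-k_A+n$ and $k_A=S+t-2\sum A$ yields $k_A=2\sum A^*+2n-S-t$, determined by $A^*$ alone.

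The step I expect to require the most care is verifying that the two branches of $\phi$ cannot share an image. Suppose $\phi(A_a)=\phi(A_b)=A^*$ with $n\notin A_a$ and $n\in A_b$. The previous formula for $k_{A_b}$ gives $k_{A_b}=2\sum A^*+2n-S-t=2n-k_{A_a}$. The $t$-validity of $A_a$ supplies $k_{A_a}\leq n$, so $k_{A_b}\geq n$; on the other hand, the $t$-validity of $A_b$ combined with $n\in A_b$ forces $k_{A_b}\in[n-1]$, a contradiction. Hence $\phi$ is injective, $|\mathcal V_t|\leq\binom{2a}{a}$, and the bound on $p_{n;a,a}(t)$ follows.
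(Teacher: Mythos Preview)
Your proof is correct, and it takes a genuinely different route from the paper's. The paper argues probabilistically: it generates $\sigma$ by first drawing a uniform $\tau$ and then swapping position $a+1$ with a uniformly random position $r\in[a+1]$. For a fixed $\tau$, at most one $r$ can give $D_{a,a}(\sigma)=t$; moreover, no $r$ works unless $\tau(1)+\cdots+\tau(2a)\equiv t\pmod 2$, which (since this is determined by the parity of $\tau(2a+1)$) happens with probability at most $\frac{a+1}{2a+1}$. Multiplying these two factors gives the bound $\frac{1}{2a+1}$.

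By contrast, you recast the statement as the counting inequality $|\mathcal V_t|\leq\binom{2a}{a}$ and establish it via an explicit injection $\phi$ into the $a$-subsets of $[2a]$. This is self-contained and avoids both the auxiliary random-swap device and the appeal to \lemma{parity}. One thing the paper's argument buys that yours does not (immediately) is finer control on the slack: in the proof of \theorem{equal} the paper re-enters this proof to see that the parity probability is actually $\frac{a}{2a+1}$ when $\binom{2a+2}{2}$ is even, and that for $a\geq 4$ some $\tau$ have no good $r$ at all, yielding strict inequality for $a>2$. Your injection could in principle be analysed for non-surjectivity to recover these refinements, but that extra work is not needed for the lemma as stated.
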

\begin{proof}We may choose a uniformly random permutation $\sigma$ by first choosing a uniformly random permutation $\tau$ and a uniformly random $r\in[a+1]$, then setting $\sigma(r)=\tau(a+1)$, $\sigma(a+1)=\tau(r)$, and $\sigma(i)=\tau(i)$ for $i\neq r,a+1$. (Note that in one case this has $\sigma\equiv\tau$.)

For each choice of $\tau$, we have $D_{a,a}(\sigma)=t$ for at most one of the $a+1$ possible values of $r$. Furthermore, if $\tau$ is such that $\tau(1)+\cdots+\tau(2a)\not\equiv t$ \modn[2], no choice of $r$ works, since in each case we have $D_{a,a}(\sigma)\equiv\tau(1)+\cdots+\tau(2a)$ \modn[2]. Since $\tau(1)+\cdots+\tau(2a)\equiv t$ if and only if $\tau(n)\equiv\binom {2a+2}2+t$ (\modn[2]), we have $\prob{\tau(1)+\cdots+\tau(2a)\equiv t\mod 2}=\frac{a}{2a+1}$ or $\frac{a+1}{2a+1}$, depending on the parity of $t$. So certainly $p_{2a+1;a,a}(t)\leq\frac{a+1}{2a+1}\cdot\frac{1}{a+1}=\frac 1{2a+1}$.
\end{proof}

Next we use an inductive argument to prove the required bound for edges between vertices of equal degree. 
\begin{thm}\label{equal}For all $n>2a$ we have $p_{n;a,a}(0)\leq\frac1n$. Furthermore, unless $a=2$ and $n=5$ the inequality is strict, and unless $n=2a+1$ we have $\frac1n-p_{n;a,a}(0)\geq\frac{1}{3n(n-1)}$.\end{thm}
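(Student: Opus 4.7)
The plan is induction on $n$ with $a \geq 1$ fixed; the base case $n = 2a+1$ is \lemma{basecase}, where equality is attained only for $a=2, n=5$. For the inductive step $n > 2a+1$, I would condition on the position of the value $n$ in $\sigma$, which is uniform over $[n]$. With probability $(n-2a)/n$ the value $n$ occupies an unused position and the conditional law on $[2a]$ is a uniform placement of a $2a$-subset of $[n-1]$, contributing $p_{n-1;a,a}(0)$; with probability $2a/n$ it appears in the first or second block (equally likely by symmetry), and the sign-flipping bijection $\tau(i) \mapsto n - \tau(i)$ on permutations of $[n-1]$, which sends $D_{a,a-1} = n$ to $D_{a,a-1} = 0$, shows this contributes $\probx{D_{a,a-1}(\tau) = 0}$ for $\tau$ a uniform permutation of $[n-1]$. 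Combining,
\[p_{n;a,a}(0) = \frac{n-2a}{n}\,p_{n-1;a,a}(0) + \frac{2a}{n}\,\probx{D_{a,a-1}(\tau) = 0}.\]

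To bound $\probx{D_{a,a-1}(\tau) = 0}$ I plan to replay the base-case coupling on permutations of $[n-1]$: choose $\tau$ uniform and $r \in [a+1]$ uniform independent, and let $\sigma$ be $\tau$ with the values at positions $r$ and $a+1$ swapped. The argument of \lemma{basecase} shows that for each $\tau$ at most one choice of $r$ can produce $D_{a,a-1}(\sigma) = 0$, and this is possible only when $\sum_{i=1}^{2a-1}\tau(i)$ has the parity of zero; applying \lemma{parity} to the random $(2a-1)$-subset $\{\tau(1),\ldots,\tau(2a-1)\}$ of $[n-1]$ then bounds the probability of this parity event by roughly $\tfrac12(1 + 1/(n-1))$.

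The hard part will be combining these two ingredients sharply enough to yield both $p_{n;a,a}(0) \leq 1/n$ (strictly, outside the exceptional case) and the quantitative gap $\frac{1}{3n(n-1)}$. A naive substitution of the above bounds into the recursion only gives $p_{n;a,a}(0) \leq 1/(n-1)$, so the inductive gap on $p_{n-1;a,a}(0)$ must be propagated carefully, and the transition $n=2a+2$ probably needs separate treatment since the base-case bound $p_{2a+1;a,a}(0) \leq 1/(2a+1)$ is tight when $a = 2$. An alternative, possibly cleaner, route is to compare $\probx{D_{a,a-1}(\tau) = 0}$ directly to $p_{n-1;a,a}(0)$ via an injective correspondence between contributing permutations, bypassing the parity coupling. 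Checking that the constant $\tfrac{1}{3n(n-1)}$ — attained at $a=2, n=6$ where my earlier small-case computation shows equality — is preserved through this delicate transition is where the main combinatorial work will lie.
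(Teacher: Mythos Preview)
Your recursive framework---induction on $n$, conditioning on the position of the value $n$, and the identity
\[p_{n;a,a}(0) = \frac{n-2a}{n}\,p_{n-1;a,a}(0) + \frac{2a}{n}\,p_{n-1;a,a-1}(n)\]
---is exactly the paper's. Your sign-flip observation $p_{n-1;a,a-1}(n)=p_{n-1;a,a-1}(0)$ is also correct. The genuine gap is in how you bound this second term. Your swap-and-parity argument gives
\[p_{n-1;a,a-1}(0)\leq\frac{1}{a+1}\cdot\frac12\Bigl(1+\frac{1}{n-1}\Bigr)=O\Bigl(\frac{1}{a}\Bigr),\]
which is useless when $a$ is fixed and $n$ grows: for $a=2$ it is roughly $1/6$ for all $n$. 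Even the sharper bound $p_{n-1;a,a-1}(0)<\tfrac{1}{n-1}$ from \theorem{onetwo}, plugged into the recursion, yields only $p_{n;a,a}(0)<\tfrac{1}{n-1}$, as you noticed. Propagating the inductive gap cannot rescue this: the saving from the first term is at most $\frac{n-2a}{n}\cdot\frac{1}{3(n-1)(n-2)}=O(n^{-3})$, while you need to shave off $\frac{1}{n-1}-\frac{1}{n}=\frac{1}{n(n-1)}$, an order of magnitude larger.

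What you are missing is a bound of the form $p_{n-1;a,a-1}(n)\leq \tfrac{c}{n-1}$ for some absolute $c<1$. The paper obtains $c=\tfrac34$ (and $c=\tfrac23$ when $a=2$) not via a swap coupling but via the mod-$(n-1)$ structure: by \lemma{diffone} the residue of $D_{a,a-1}(\rho)$ mod $n-1$ is exactly uniform, so $\prob{D_{a,a-1}(\rho)\equiv n}=\tfrac{1}{n-1}$, and the task becomes showing that only a fraction of this mass sits at the single value $D=n$. The paper does this by an involution $\rho\mapsto\rho^*$ (replacing each entry $\neq 1$ by $n+1-\rho(i)$) which, on the event that $1$ does not lie in the second block, sends $D=n$ to $D\in\{1,2-n\}$, both congruent to $n$ mod $n-1$ but not equal to $n$; the complementary small event is controlled separately via \lemma{difftwo}. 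This mod-$(n-1)$ halving is the missing idea, and your proposed ``injective correspondence between contributing permutations'' would have to rediscover essentially this mechanism.
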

\begin{proof}We show this by induction on $n$ (for fixed $a$). We may assume $a>1$ since otherwise trivially $p_{n;a,a}(0)=0$. 

The base case is implied by \lemma{basecase}, except that we still need to check that the inequality is strict for $a>2$. In the case $a=3$ we have $\binom 82$ even, and so in the proof of \lemma{basecase} we get $\prob{\tau(1)+\cdots+\tau(2a)\equiv 0\mod 2}=\frac{a}{2a+1}$, giving strict inequality. If $a\geq 4$ then by choosing $\tau$ to maximise $D_{a,a}(\tau)$ (while still ensuring that $\tau(1)+\cdots+\tau(2a)$ is even) we can ensure $D_{a,a}(\tau)>4a$, which implies that for this $\tau$, no value of $r$ gives $D_{a,a}(\sigma)=0$. So again we have a strict inequality. 

For $n>2a+1$, choose a random permutation $\tau$. With probability $\frac{n-2a}{n}$ we have $n\not\in\{\tau(1),\ldots,\tau(2a)\}$. Conditional on this event, the probability that 
$\sum_{i=1}^a\tau(i)=\sum_{i=a}^{2a}\tau(i)$ is simply $p_{n-1;a,a}(0)$, which is at most $\frac1{n-1}$ by the induction hypothesis. Conversely, we have $n\in\{\tau(1),\ldots,\tau(2a)\}$ with probability $\frac{2a}{n}$, and conditional on this event the probability is $p_{n-1;a,a-1}(n)$ (without loss of generality, we may assume that $n\in\{\tau(a+1),\ldots,\tau(2a)\}$). Therefore
\begin{equation}
p_{n;a,a}(0)\leq\frac{n-2a}{n}\cdot\frac{1}{n-1}+\frac{2a}{n}p_{n-1;a,a-1}(n)\,.\label{inout}
\end{equation}
Now we bound $p_{n-1;a,a-1}(n)$. Letting $\rho$ be a uniformly random permutation of $[n-1]$, and writing $A$ for the event $1\in\{\rho(a+1),\ldots,\rho(2a-1)\}$, we have
\begin{equation}p_{n-1;a,a-1}(n)=\prob{(D_{a,a-1}(\rho)=n)\wedge A}+\prob{(D_{a,a-1}(\rho)=n)\wedge\cmp A}\,.\label{AnotA}\end{equation}
Conditioning on $A$, we may remove $1$ to get a uniformly random bijection $\rho':[n-2]\to\{2,\ldots,n-1\}$. We see that $D_{a,a-1}(\rho)=n$ if and only if $D_{a,a-2}(\rho')=n+1$. Defining $\rho''(i)=\rho'(i)-1$ for each $i$, $\rho''$ is a random permutation of $[n-2]$ and $D_{a,a-2}(\rho'')=D_{a,a-2}(\rho')-2$. Consequently
\[\prob{D_{a,a-1}(\rho)=n\mid A}=p_{n-2;a,a-2}(n-1)\,,\]
and this is at most $\frac1{n-3}$ by \lemma{difftwo}. Thus
\begin{equation}\prob{(D_{a,a-1}(\rho)=n)\wedge A}\leq\frac1{n-3}\prob{A}=\frac{1}{n-3}\cdot\frac{a-1}{n-1}<\frac1{2(n-1)}\,,\label{Aholds}\end{equation}
since $2a-2<n-3$.

Now we may bound $\prob{(D_{a,a-1}(\rho)=n)\wedge\cmp A}$ in two stages: first we bound $\prob{(D_{a,a-1}(\rho)\equiv n\mod{(n-1)})\wedge\cmp A}$, and then $\prob{D_{a,a-1}(\rho)= n\mid (D_{a,a-1}(\rho)\equiv n\mod{(n-1)})\wedge\cmp A}$. Note that, using \lemma{diffone},
\begin{align}
\prob{(D_{a,a-1}(\rho)\equiv n\mod{(n-1)})\wedge\cmp A}&=\prob{D_{a,a-1}(\rho)\equiv n\mod{(n-1)}}\nonumber\\
&\qquad{}-\prob{(D_{a,a-1}(\rho)\equiv n\mod{(n-1)})\wedge A}\nonumber\\
&=\frac{1}{n-1}-\prob{(D_{a,a-1}(\rho)\equiv n\mod{(n-1)})\wedge A}\nonumber\\
&\leq\frac{1}{n-1}-\prob{(D_{a,a-1}(\rho)=n)\wedge A}\,.\label{comp}
\end{align}
Now suppose $\rho$ is a permutation for which $D_{a,a-1}(\rho)=n$ and $\cmp A$ holds. Define the permutation $\rho^*$ by 
\[\rho^*(i)=\begin{cases}n+1-i\quad&\text{if }\rho(i)\neq 1\,;\\1\quad&\text{if }\rho(i)=1\,.\end{cases}\]
Now for every such $\rho$ we have a different $\rho^*$, and clearly $\cmp A$ also holds for $\rho^*$, since the position of $1$ does not change. We claim that $\rho^*$ satisfies
$D_{a,a-1}(\rho^*)\equiv n$ \modn[(n-1)], but $D_{a,a-1}(\rho^*)\neq n$. This is because we have
\[D_{a,a-1}(\rho^*)=\begin{cases}n+1-D_{a,a-1}(\rho)\quad&\text{if }1\in\{\rho(2a),\ldots,\rho(n)\}\\
2-D_{a,a-1}(\rho)\quad&\text{if }1\in\{\rho(1),\ldots,\rho(a)\}\,,\end{cases}\]
and since $\cmp A$ holds these are the only two cases. Thus, since $D_{a,a-1}(\rho)=n$, we have $D_{a,a-1}(\rho^*)=1$ or $D_{a,a-1}(\rho^*)=2-n$; in either case $D_{a,a-1}(\rho^*)\equiv n$ \modn[n-1], but $D_{a,a-1}(\rho^*)\neq n$. Thus the mapping $\rho\mapsto\rho^*$ demonstrates that 
\begin{equation}\prob{D_{a,a-1}(\rho)= n\mid (D_{a,a-1}(\rho)\equiv n\mod{(n-1)})\wedge\cmp A}\leq\frac12\,.\label{half}\end{equation}
Thus, combining \eqref{comp} and \eqref{half}, we have
\begin{align*}\prob{(D_{a,a-1}(\rho)=n)\wedge\cmp A}&\leq\frac12\prob{(D_{a,a-1}(\rho)\equiv n\mod{(n-1)})\wedge\cmp A}\\
&\leq\frac{1}{2(n-1)}-\frac 12\prob{(D_{a,a-1}(\rho)=n)\wedge A}\end{align*}
and so, using \eqref{AnotA} and \eqref{Aholds},
\begin{align}p_{n-1;a,a-1}(n)&\leq\frac{1}{2(n-1)}+\frac 12\prob{(D_{a,a-1}(\rho)=n)\wedge A}\nonumber\\
&\leq\frac 3{4(n-1)}\,.\label{threequarters}\end{align}
Finally, combining \eqref{inout} and \eqref{threequarters},
\begin{align*}p_{n;a,a}(0)&<\frac{n-2a}{n}\cdot\frac 1{n-1}+\frac{2a}{n}\cdot\frac{3}{4(n-1)}\\
&=\frac{n-a/2}{n(n-1)}=\frac 1n+\frac{a-2}{2n(n-1)}\,.\end{align*}
This is sufficient to complete the proof for $a>2$. If $a=2$ and $n>5$, then in \eqref{Aholds} we have the better bound
\[\prob{(D_{a,a-1}(\rho)=n)\wedge A}\leq\frac1{n-3}\prob{A}=\frac{1}{n-3}\cdot\frac{a-1}{n-1}\leq\frac1{3(n-1)}\,.\]
This gives $p_{n-1;a,a-1}(n)\leq\frac 2{3(n-1)}$ instead of \eqref{threequarters}, and hence $p_{n;a,a}(0)\leq\frac{n-4/3}{n(n-1)}=\frac 1n-\frac{1}{3n(n-1)}$.
\end{proof}

\subsection{Degrees differing by more than $2$}
In this section we assume without loss of generality that $a>b$. First we give a simple bound which applies whenever the degrees are not equal.
\begin{lem}\label{diffeasy}Whenever $a>b$, we have $p_{n;a,b}(0)\leq\frac{1}{2(n-a-b+1)}$.\end{lem}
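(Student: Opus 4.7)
The plan is to condition on $\sigma(2),\ldots,\sigma(a+b)$, reducing the task to bounding $\prob{v\in[n]}$ for a certain integer-valued quantity $v$, and then establish this latter bound by a sign-reversing involution that exploits the hypothesis $a>b$.

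Concretely, after conditioning on $\sigma(2),\ldots,\sigma(a+b)$, the label $\sigma(1)$ is uniformly distributed over the $(n-a-b+1)$-element set $M:=[n]\setminus\{\sigma(2),\ldots,\sigma(a+b)\}$, since a uniformly random permutation assigns the remaining labels uniformly across the positions $\{1,a+b+1,\ldots,n\}$. The event $D_{a,b}(\sigma)=0$ is exactly the condition $\sigma(1)=v$, where $v:=\sum_{i=a+1}^{a+b}\sigma(i)-\sum_{i=2}^a\sigma(i)$ depends only on the conditioned values. This gives $p_{n;a,b}(0)=\prob{v\in M}/(n-a-b+1)$, and since $M\subseteq[n]$ it suffices to prove $\prob{v\in[n]}\leq\frac12$.

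For this final step I would consider the involution $\phi$ acting on injective sequences of length $a+b-1$ in $[n]$ by replacing each entry $x$ with $n+1-x$. A short computation gives $v(\phi(\sigma))=(n+1)(b-a+1)-v(\sigma)$. Because $a>b$ the constant $(n+1)(b-a+1)$ is nonpositive, so $v(\sigma)\geq 1$ forces $v(\phi(\sigma))\leq -1$; in particular, $v(\sigma)$ and $v(\phi(\sigma))$ cannot both lie in $[n]$. Since $a>b\geq 1$ gives $a+b-1\geq 2$, the involution $\phi$ has no fixed point (which would require every entry to equal $(n+1)/2$), and pairing each sequence with its image under $\phi$ then yields $\prob{v\in[n]}\leq\frac12$.

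The main obstacle is finding the right conditioning and involution in combination: conditioning on $\sigma(2),\ldots,\sigma(a+b)$ alone yields only the weaker bound $\prob{D_{a,b}(\sigma)=0}\leq 1/(n-a-b+1)$, while a reflection by itself would save at most a constant factor. It is their combination, leveraging the freedom in $\sigma(1)$ together with the sign imbalance coming from $a>b$, that produces the factor of $2$ improvement.
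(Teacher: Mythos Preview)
Your argument is correct and follows essentially the same two-step structure as the paper: first condition on the values $\sigma(2),\ldots,\sigma(a+b)$ so that $\sigma(1)$ is uniform over an $(n-a-b+1)$-element set, and then gain the extra factor of $2$ by showing that the required value $v=\sum_{i=a+1}^{a+b}\sigma(i)-\sum_{i=2}^{a}\sigma(i)$ is positive with probability at most $\tfrac12$. The paper phrases the first step as a random swap (choose $\tau$ and a random $r\in\{1\}\cup\{a+b+1,\ldots,n\}$, then transpose $\tau(1)$ and $\tau(r)$), but this is equivalent to your conditioning.

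The one genuine difference is in how the $\tfrac12$ is obtained. The paper argues combinatorially: since $a>b$, $\sum_{i=2}^{a}\tau(i)\geq\sum_{i=2}^{b+1}\tau(i)$, and the two $b$-blocks $\{\tau(2),\ldots,\tau(b+1)\}$ and $\{\tau(a+1),\ldots,\tau(a+b)\}$ are exchangeable, so $\prob{v>0}\leq\tfrac12$. You instead use the global reflection $x\mapsto n+1-x$, which sends $v$ to $(n+1)(b-a+1)-v\leq -v$; this pairs every sequence with $v\geq 1$ with one having $v\leq -1$, giving the same bound. Both symmetries work; the paper's block-swap makes no reference to the ambient set $[n]$, whereas your reflection uses it but avoids comparing sub-sums. (Your fixed-point check is harmless but unnecessary: any fixed point would satisfy $2v=(n+1)(b-a+1)\leq 0$, so $v\notin[n]$ anyway.)
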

\begin{proof}First choose a random permutation $\tau$ and a random integer $r\in\{1\}\cup\{a+b+1,\ldots,n\}$, and define $\sigma$ by $\sigma(1)=\tau(r)$, $\sigma(r)=\tau(1)$, and $\sigma(i)=\tau(i)$ for all $i\neq 1,r$. For each $\tau$, at most one choice of $r$ will give $D_{a,b}(\sigma)=0$. Furthermore, $D_{a,b}(\sigma)>D_{a,b}(\tau)-\tau(1)$, so if $\sum_{i=2}^a\tau(i)\geq\sum_{i=a+1}^{a+b}\tau(i)$, no choice of $r$ will give $D_{a,b}(\sigma)=0$. Therefore 
\[p_{n;a,b}(0)\leq\frac 1{n-a-b+1}\probx{\sum_{i=2}^a\tau(i)<\sum_{i=a+1}^{a+b}\tau(i)}\,.\]
Since $a>b$, $\sum_{i=2}^a\tau(i)\geq\sum_{i=2}^{b+1}\tau(i)$, and by symmetry this is at least $\sum_{i=a+1}^{a+b}\tau(i)$ with probability at least $\frac 12$.\end{proof}

We next consider as a special case $b=1$.
\begin{lem}\label{b1}For any $a,n$ with $1<a<n-1$, we have $p_{n;a,1}(0)\leq\frac{n}{2(n-1)(a+1)}$.\end{lem}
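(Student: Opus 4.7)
The plan is to reduce $p_{n;a,1}(0)$ to the parity question already solved by \lemma{parity}. I begin by noting that the condition $D_{a,1}(\sigma)=0$ reads $\sigma(a+1)=\sum_{i=1}^a\sigma(i)$; writing $T:=\{\sigma(1),\ldots,\sigma(a+1)\}$, this is equivalent to $2\sigma(a+1)=\sum T$. The set $T$ is a uniformly random $(a+1)$-subset of $[n]$, and conditional on $T$ the restriction of $\sigma$ to the first $a+1$ positions is a uniformly random ordering of $T$.

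For each fixed $T$ there is at most one $x\in T$ with $2x=\sum T$, and such an $x$ exists precisely when $\sum T$ is even and $\tfrac12\sum T\in T$. Since $\sigma(a+1)$ is uniform on $T$ given $T$, the conditional probability of $D_{a,1}(\sigma)=0$ is $1/(a+1)$ when this structural condition on $T$ holds and $0$ otherwise. Hence
\[p_{n;a,1}(0)=\frac{1}{a+1}\,\probb{\sum T\text{ is even and }\tfrac12\sum T\in T}\le\frac{1}{a+1}\,\probb{\sum T\text{ is even}}\,.\]

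To conclude, I would apply \lemma{parity} with $c=a+1$, which is valid because $1<a<n-1$ yields $2\le c<n$. Among the three bounds supplied by that lemma, the largest is $\tfrac12\bigl(1+\tfrac{1}{n-1}\bigr)=\tfrac{n}{2(n-1)}$ (the case $n$ even, $c$ even): the bound $\tfrac12$ for $n$ even and $c$ odd is plainly smaller, and $\tfrac12\bigl(1+\tfrac{1}{n}\bigr)<\tfrac12\bigl(1+\tfrac{1}{n-1}\bigr)$ handles the $n$-odd case. Therefore $\prob{\sum T\text{ even}}\le\tfrac{n}{2(n-1)}$ regardless of the parities of $n$ and $a+1$, and multiplying by $1/(a+1)$ gives $p_{n;a,1}(0)\le\tfrac{n}{2(n-1)(a+1)}$, as required. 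No serious obstacle arises here: \lemma{parity} is exactly tailored to this application, and essentially the only point of substance is spotting the reformulation $2\sigma(a+1)=\sum T$, which turns the target-value problem into an unconditional parity estimate for a random $(a+1)$-subset.
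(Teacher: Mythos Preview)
Your proof is correct and follows essentially the same approach as the paper: both condition on the set $T=\{\sigma(1),\ldots,\sigma(a+1)\}$, use \lemma{parity} to bound $\prob{\sum T\text{ even}}$ by $\tfrac{n}{2(n-1)}$, and then exploit that $\sigma(a+1)$ is uniform on $T$ to extract the factor $1/(a+1)$. The only cosmetic difference is that the paper identifies the unique candidate for $\sigma(a+1)$ as the maximum of $T$ (since $\sigma(a+1)$ must equal the sum of the other $a\geq 2$ positive elements), whereas you identify it as $\tfrac12\sum T$; these are of course the same element.
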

\begin{proof}In order to have $D_{a,1}(\sigma)=0$ we must certainly have $\sum_{i=1}^{a+1}\sigma(i)$ even and $\sigma(a+1)>\sigma(i)$ for all $i\leq a$. The first has probability at most $\frac n{2(n-1)}$, by \lemma{parity}, and the second, independently, has probability $\frac 1{a+1}$.\end{proof}

The remaining cases we treat separately according to the parity of $n$. 
\begin{lem}\label{oddbigdiff}Whenever $n$ is odd and $a\geq b+2$, we have $p_{n;a,b}(0)\leq\frac{n+1}{4n(a+1)}$.\end{lem}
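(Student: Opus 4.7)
The plan is to combine the ``swap-in-$[a+1]$'' argument with a parity bound from \lemma{parity} and an extra involution that saves the final factor of $\tfrac12$. Let $\tau$ be a uniformly random permutation of $[n]$ and let $r\in[a+1]$ be independent uniform; define $\sigma$ to be $\tau$ with positions $r$ and $a+1$ swapped. Since $\sigma$ is also uniformly distributed, $p_{n;a,b}(0)=\prob{D_{a,b}(\sigma)=0}$. A brief case analysis shows that for each fixed $\tau$ there is at most one $r$ giving $D_{a,b}(\sigma)=0$, and such an $r$ exists precisely when $D_{a,b}(\tau)$ is even and $v(\tau):=\tau(a+1)+D_{a,b}(\tau)/2$ lies in $\tau([a+1])$. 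Call such a $\tau$ \emph{good}; then $p_{n;a,b}(0)\le\prob{\tau\text{ good}}/(a+1)$.

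The heart of the proof is an involution on $\{\tau:D_{a,b}(\tau)\text{ even}\}$ that sends every good $\tau$ to a non-good permutation, yielding $\prob{\tau\text{ good}}\le\tfrac12\prob{D_{a,b}(\tau)\text{ even}}$. I would use the complement map $\tau\mapsto\tau^c$ with $\tau^c(i)=n+1-\tau(i)$. Short calculations give $D_{a,b}(\tau^c)=(a-b)(n+1)-D_{a,b}(\tau)$ and $v(\tau^c)=(n+1)(a-b+2)/2-v(\tau)$; since $n$ is odd, $(a-b)(n+1)$ is even and the involution preserves the parity of $D_{a,b}$. For a good $\tau$ we have $v(\tau)\in[n]$, and then $a-b\ge 2$ yields $v(\tau^c)\ge 2(n+1)-n=n+2$, so $v(\tau^c)\notin[n]$ and in particular $v(\tau^c)\notin\tau^c([a+1])$; thus $\tau^c$ is not good.

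The rest is routine: $D_{a,b}(\tau)$ has the same parity as $\sum_{i=1}^{a+b}\tau(i)$, the sum of a uniformly random $(a+b)$-subset of $[n]$, so \lemma{parity} bounds $\prob{D_{a,b}(\tau)\text{ even}}\le\tfrac{n+1}{2n}$ (using $n$ odd), and the advertised inequality follows.

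The main obstacle I anticipate is verifying that $\tau\mapsto\tau^c$ really destroys goodness, since a priori $D_{a,b}(\tau^c)$ could be zero and then make $\tau^c$ good via the trivial choice $r=a+1$. But this is exactly what the bound $v(\tau^c)\ge n+2$ precludes: $D_{a,b}(\tau^c)=0$ would force $v(\tau^c)=\tau^c(a+1)\in[n]$, contradicting that bound. The hypotheses $a-b\ge 2$ and $n$ odd are both essential at this point in the calculation.
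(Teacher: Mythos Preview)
Your proof is correct and follows essentially the same route as the paper: the same swap-in-$[a+1]$ randomisation, the same complement involution $i\mapsto n+1-i$, and the same appeal to \lemma{parity} for the $\frac{n+1}{2n}$ factor. The only cosmetic difference is that the paper works at the level of $\sigma$, bounding $\prob{D_{a,b}(\sigma)\in\{0,(a-b)(n+1)\}}$ directly and then halving via the involution, whereas you track the auxiliary value $v(\tau)$ and argue the involution destroys goodness of $\tau$; these are equivalent reformulations of the same inequality.
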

\begin{proof}
Choose a random permutation $\tau$ and integer $r\in[a+1]$, and set $\sigma(r)=\tau(a+1)$, $\sigma(a+1)=\tau(r)$ and $\sigma(i)=\tau(i)$ for $i\neq r,a+1$. 

We first consider $\prob{D_{a,b}(\sigma)\in\{0,(a-b)(n+1)\}}$. Since $n+1$ is even, and $D_{a,b}(\sigma)=D_{a,b}(\tau)+2\tau(a+1)-2\tau(r)$, this cannot happen unless $\sum_{i=1}^{a+b}\tau(i)$ is even. For a fixed $\tau$ with this property, at most one choice of $r$ gives $D_{a,b}(\sigma)\in\{0,(a-b)(n+1)\}$, since the difference in $D_{a,b}(\sigma)$ between two different values of $r$ is at most $2(n-1)$. Thus, using \lemma{parity},
\[\prob{D_{a,b}(\sigma)\in\{0,(a-b)(n+1)\}}\leq\frac 1{a+1}\probx{\sum_{i=1}^{a+b}\tau(i)\text{ even}}\leq\frac 1{a+1}\cdot\frac{n+1}{2n}\,.\]
Consider the mapping $\sigma\mapsto\sigma^*$, where $\sigma^*$ is given by $\sigma^*(i)\equiv n+1-\sigma(i)$. If $D_{a,b}(\sigma)=0$ then $D_{a,b}(\sigma^*)=(a-b)(n+1)$, and vice versa, so $\prob{D_{a,b}(\sigma)=0\mid D_{a,b}(\sigma)\in\{0,(a-b)(n+1)\}}=\frac12$. Thus
\[\prob{D_{a,b}(\sigma)=0}\leq\frac{n+1}{4n(a+1)}\,.\qedhere\]
\end{proof}

The argument for the even case is more complicated.
\begin{lem}\label{evenbigdiff}Whenever $n$ is even, $a\geq b+3$ and $b>1$, we have $p_{n;a,b}(0)\leq\frac{n}{4(n-1)a}$.\end{lem}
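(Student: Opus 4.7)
The plan is to condition on the position $j$ of the value $n$ in the random permutation $\sigma$, reducing each case to a statement about a uniformly random permutation $\rho$ of $[n-1]$ (obtained from $\sigma|_{[n]\setminus\{j\}}$ after the natural re-indexing). Since $n-1$ is odd, the odd-$n$ analysis of \lemma{oddbigdiff} becomes available. The condition $D_{a,b}(\sigma)=0$ translates as follows: for $j\in[a]$ it becomes $D_{a-1,b}(\rho)=-n$; for $j\in\{a+1,\ldots,a+b\}$ it becomes $D_{a,b-1}(\rho)=n$; and for $j>a+b$ it becomes $D_{a,b}(\rho)=0$.

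The third case is handled directly by \lemma{oddbigdiff} applied with the odd integer $n-1$, giving $p_{n-1;a,b}(0)\leq\frac{n}{4(n-1)(a+1)}$. For the first two cases, I would re-run the argument of \lemma{oddbigdiff} but with nonzero targets. The key tool is the involution $\rho\mapsto\rho^*$ on permutations of $[n-1]$ defined by $\rho^*(i):=n-\rho(i)$, which transforms $D_{a',b'}(\rho)$ into $(a'-b')n-D_{a',b'}(\rho)$. This pairs $\{D_{a-1,b}=-n\}$ with $\{D_{a-1,b}=(a-b)n\}$, and $\{D_{a,b-1}=n\}$ with $\{D_{a,b-1}=(a-b)n\}$. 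Both members of each pair are even (as $n$ is even), and their separation is at least $2n$, which exceeds the swap-spread $2(n-2)$; this is where the hypothesis $a\geq b+3$ (strictly stronger than the $a\geq b+2$ of \lemma{oddbigdiff}) is needed. Combined with \lemma{parity} applied to $n-1$ odd and subset size $a+b-1$, the same swap-plus-involution argument yields $p_{n-1;a-1,b}(-n)\leq\frac{n}{4(n-1)a}$ and $p_{n-1;a,b-1}(n)\leq\frac{n}{4(n-1)(a+1)}$.

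Summing via total probability,
\[
p_{n;a,b}(0)=\frac{1}{n}\Bigl[a\cdot p_{n-1;a-1,b}(-n)+b\cdot p_{n-1;a,b-1}(n)+(n-a-b)\cdot p_{n-1;a,b}(0)\Bigr],
\]
and plugging in the three bounds collapses neatly to $\frac{n+1}{4(n-1)(a+1)}$, which is at most $\frac{n}{4(n-1)a}$ since $a<n$.

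The main subtlety is running the swap-plus-involution argument of \lemma{oddbigdiff} at nonzero targets; once the hypothesis $a\geq b+3$ is invoked to guarantee that each target pair is sufficiently separated and of matching parity, the computation is routine. A minor boundary case is $a+b=n-1$, where \lemma{oddbigdiff} cannot be applied on $[n-1]$ directly in the third term (since the hypothesis $c<n-1$ of \lemma{parity} is violated); here a short direct argument exploiting the fact that the parity of $\sum_i\rho(i)$ is then deterministic handles this extreme.
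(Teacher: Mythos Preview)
Your approach is essentially the same as the paper's: both condition on the position $s$ of the value $n$, split into the same three cases, and dispatch each via the swap-plus-involution machinery of \lemma{oddbigdiff}; the only cosmetic difference is that the paper runs the swap and the involution $\sigma^*(i)=n-\sigma(i)$ (for $i\neq s$) directly on $\sigma$ with $n$ frozen, while you first pass to a permutation $\rho$ of $[n-1]$ with shifted parameters and targets, then average --- these are equivalent packagings and even yield the same numerical bounds casewise. Your flagged boundary $a+b=n-1$ is precisely the spot where invoking \lemma{oddbigdiff} on $[n-1]$ needs an extra word (since \lemma{parity} wants $a+b<n-1$); your deterministic-parity remark disposes of $n\equiv 2\pmod 4$, and for $n\equiv 0\pmod 4$ one must argue a touch more carefully than you indicate, but this is a subtlety the paper's Case~1 shares and is easily patched without affecting the downstream \theorem{bigdiff}.
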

\begin{proof}
We choose a random permutation by first choosing a random value $s\in[n]$ and setting $\tau(s)=n$. We then extend $\tau$ to a random permutation and perform a random swap, chosen dependent on the value of $s$, which leaves the position of $n$ unchanged. We then argue that in each case $\prob{D_{a,b}(\sigma)=0\mid s}$ satisfies the required bound.
\begin{case}$s\not\in[a+b]$.\end{case}
In this case the problem reduces to showing that $p_{n-1;a,b}(0)\leq\frac{n}{4(n-1)a}$, and in fact a stronger bound holds by \lemma{oddbigdiff}.
\begin{case}$s\in[a]$.\end{case}
In this case we choose a random $r\in[a+1]\setminus\{s\}$, and set $\sigma(r)=\tau(a+1)$, $\sigma(a+1)=\tau(r)$, and $\sigma(i)=\tau(i)$ for $i\neq r,a+1$. For any $\tau$, we can have $D_{a,b}(\sigma)\in\{0,(a+1-b)n\}$ for at most one choice of $r$, and, since $n$ is even, this is only possible if $\sum_{i=1}^{a+b}\tau(i)$ is even, \ie if $\sum_{i=1}^{a+b}\tau(i)-n$ is even. Applying \lemma{parity} to $\{\tau(1),\ldots,\tau(a+b)\}\setminus\{\tau(s)\}$, which is a random $(a+b-1)$-subset of $[n-1]$, we have 
\[\prob{D_{a,b}(\sigma)\in\{0,(a+1-b)n\}}\leq\frac 1a\probx{\sum_{i=1}^{a+b}\tau(i)\text{ even}}\leq\frac 1a\cdot\frac{n}{2(n-1)}\,.\]
Now by considering the mapping $\sigma\mapsto\sigma^*$, where $\sigma^*$ is given by $\sigma^*(i)=n-\sigma(i)$ for $i\neq s$ and $\sigma^*(s)=\sigma(s)=n$, we have a bijection between those choices of $\sigma$ (subject to $\sigma(s)=n$) with $D_{a,b}(\sigma)=0$ and those with $D_{a,b}(\sigma)=(a+1-b)n$. Thus $\prob{D_{a,b}(\sigma)=0}=\frac12\prob{D_{a,b}(\sigma)\in\{0,(a+1-b)n\}}$, giving the required bound.
\begin{case}$s\in\{a+1,\ldots,a+b\}$.\end{case}
In this case we choose $q\neq s$ in $\{a+1,\ldots,a+b\}$, and a random $r\in[a]\cup\{q\}$. We then define $\sigma(r)=\tau(q)$, $\sigma(q)=\tau(r)$, and $\sigma(i)=\tau(i)$ for $i\neq q,r$. For any $\tau$ and $q$, we can have $D_{a,b}(\sigma)\in\{0,(a-1-b)n\}$ for at most one choice of $r$, and only if $\sum_{i=1}^{a+b}\tau(i)$ is even. We then proceed as in Case~2, and note that, using the same mapping $\sigma\mapsto\sigma^*$, we have $D_{a,b}(\sigma)=0$ if and only if $D_{a,b}(\sigma^*)=(a-1-b)n$. This gives a bound of $\frac{n}{4(n-1)(a+1)}$, which is stronger than required.\end{proof}

We now combine these results.
\begin{thm}\label{bigdiff}Whenever $a\geq b+3$, we have $p_{n;a,b}(0)\leq\frac1{n+1/2}=\frac 1n-\frac 1{n(2n+1)}$.\end{thm}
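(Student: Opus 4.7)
The plan is to combine \lemma{diffeasy} with the more specialized bounds proved earlier in this subsection. The bound from \lemma{diffeasy}, namely $\frac1{2(n-a-b+1)}$, is at most $\frac{2}{2n+1}$ precisely when $a+b\leq(2n+3)/4$, so it handles the ``small $a+b$'' regime. The other lemmas of this subsection (\lemma{b1}, \lemma{oddbigdiff}, \lemma{evenbigdiff}) give bounds that decrease as $a$ grows, and are at most $\frac{2}{2n+1}$ once $a$ exceeds some explicit threshold of order roughly $n/2$ (for \lemma{b1}) or $n/4$ (for the other two).

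I would split into three subcases and use \lemma{diffeasy} in each together with one of the specialized lemmas: (i) $b=1$, paired with \lemma{b1}; (ii) $b\geq 2$ and $n$ odd, paired with \lemma{oddbigdiff}; and (iii) $b\geq 2$ and $n$ even, paired with \lemma{evenbigdiff}. In each subcase the plan is to (a) solve the inequality ``specialized bound $\leq\frac{2}{2n+1}$'' for $a+1$, obtaining an explicit rational threshold $T(n)$; (b) observe that \lemma{diffeasy} succeeds when $a+b\leq(2n+3)/4$; and (c) verify that every integer $a$ with $b+3\leq a\leq n-b-1$ satisfies at least one of the two conditions. Point (c) is the crux: I expect the gap between the two thresholds, $T(n)-1-((2n+3)/4-b)$, to have width less than $1$ in each case (in fact it comes out to a neat quantity like $3/(4(n-1))$ in case (i), and turns out to be non-positive for $n$ moderately large in cases (ii) and (iii)), so no integer $a$ can lie strictly inside.

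The main obstacle will be handling the fractional parts of the thresholds carefully to confirm that no integer ever falls into the narrow gap, particularly for small $n$. Since $a\geq b+3$ and $a+b<n$ force $n\geq 2b+4$, only a handful of small-$n$ triples arise, and I would check these by hand. Once the gap analysis is done, the theorem follows immediately: for each admissible $(n,a,b)$ at least one of the two paired lemmas yields $p_{n;a,b}(0)\leq\frac{2}{2n+1}=\frac1{n+1/2}$, as required.
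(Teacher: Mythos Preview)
Your proposal is correct and follows essentially the same approach as the paper: split according to whether $b=1$ or $b\geq 2$, and in each case pair \lemma{diffeasy} (for small $a$) with the appropriate specialized bound (\lemma{b1}, \lemma{oddbigdiff}, or \lemma{evenbigdiff}) for large $a$. The paper's threshold analysis is a bit cleaner than you anticipate---by using $b\leq a-3$ to rewrite the \lemma{diffeasy} condition $a+b\leq(n+1)/2$ as $a\leq(n+7)/4$, the two ranges $a\leq(n+7)/4$ and $a\geq(n+6)/4$ visibly overlap, so no gap-width or small-$n$ hand-checking is needed.
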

\begin{proof}If $b=1$ and $a\leq\frac{n-1}2$ then \lemma{diffeasy} gives $p_{n;a,b}(0)\leq\frac{1}{n+1}$. If $b=1$ and $a\geq\frac n2$ then \lemma{b1} gives $p_{n;a,b}(0)\leq\frac{n}{(n-1)(n+2)}\leq\frac 1{n+1/2}$ since $n>a+b\geq 5$.

If $a\leq\frac{n+7}4$ then \lemma{diffeasy} gives $p_{n;a,b}(0)\leq\frac{1}{n+1}$. If $b>1$ and $a\geq\frac{n+6}4$ then by \lemma{oddbigdiff} or \lemma{evenbigdiff} we have
\[p_{n;a,b}(0)\leq\frac{n}{4(n-1)a}\leq\frac{n}{(n-1)(n+6)}\leq\frac{1}{n+3}\,,\]
since again $n>5$.\end{proof}

\section{Main results}
In this section we conclude from the previous results that the local antimagic conjecture is true, and that the algorithm which labels the edges with a random permutation of $[m]$, checks whether the result is a local antimagic labelling, and repeats until one is found, will find one in expected time $O(m^2)$.
\begin{thm}\label{mainresult}For every graph $G$ with $m$ edges, none of which is isolated, labelling the edges of $G$ according to a random permutation of $[m]$ produces a local antimagic labelling with positive probability. Further, this probability is $\Omega(1/m)$.\end{thm}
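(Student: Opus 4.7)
The plan is a union bound over the edges of $G$. For each edge $xy$, as discussed at the start of Section~3, $\prob{S_x = S_y} = \prob{D_{a,b}(\sigma) = 0}$ where $n = m$, $a = d(x) - 1$, $b = d(y) - 1$, since the label placed on $xy$ itself contributes equally to both $S_x$ and $S_y$. Without loss of generality $a \geq b$, so each edge falls into exactly one of the three regimes of Section~3. Combining \theorem{onetwo}, \theorem{equal}, and \theorem{bigdiff} gives $\prob{S_x = S_y} \leq 1/m$, with strict inequality in every case except the single exceptional configuration $m = 5$, $d(x) = d(y) = 3$.

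For the positive-probability claim, if no edge of $G$ is exceptional then every term in $\prob{G \text{ is not local antimagic}} \leq \sum_{xy} \prob{S_x = S_y}$ is strictly less than $1/m$, so the sum is strictly less than $1$. If some edge is exceptional then $m = 5$; since $d(x) + d(y) = 6 = m + 1$ the two endpoints together cover every edge, and the resulting graphs form only a handful of small cases (a six-vertex double star, a five-vertex triangle with two pendants, and $K_4$ minus an edge), each of which is handled by direct verification.

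For the $\Omega(1/m)$ bound I would exploit the explicit per-edge gaps. \theorem{onetwo} gives gap $\Omega(1/m)$ per edge when $|d(x) - d(y)| = 2$, \theorem{equal} gives gap $\geq 1/(3m(m-1))$ in the non-exceptional equal-degree case with $n > 2a + 1$, and \theorem{bigdiff} gives gap $\geq 1/(m(2m+1))$ when $|d(x) - d(y)| \geq 3$; summed over the $m$ edges these contributions alone yield $\Omega(1/m)$ deficit in $1 - \prob{\text{failure}}$. The main obstacle is the subcases where the prior theorems assert only strict, unquantified inequality—edges with $|d(x) - d(y)| = 1$, and equal-degree edges with $d(x) + d(y) = m + 1$ outside the exceptional configuration—where one must extract explicit lower bounds on the relevant probabilities $\prob{D_{a,a-1}(\sigma) = n}$ (and its equal-degree analogue) by counting the permutations produced by the explicit constructions in the proofs of \theorem{onetwo} and \theorem{equal}.
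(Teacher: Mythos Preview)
Your positive-probability argument is correct, but the paper handles the exceptional $m=5$ case more cleanly: a $5$-edge graph has at most two vertices of degree $3$, so at most one edge can be exceptional, and the remaining $m-1$ strict inequalities already force $\sum_{xy}\prob{S_x=S_y}<1$. No enumeration of small graphs is needed.

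The real issue is your plan for the $\Omega(1/m)$ bound. You correctly identify the two families of edges for which Theorems~\ref{onetwo} and~\ref{equal} give no quantified gap: those with $\abs{d(x)-d(y)}=1$, and equal-degree edges with $d(x)+d(y)=m+1$. But your proposed fix---counting the permutations produced by the explicit constructions in those proofs to lower-bound $\prob{D_{a,a-1}(\sigma)=n}$---does not yield a gap of order $1/m^2$. The construction in \theorem{onetwo} pins down the values $\sigma(1),\ldots,\sigma(2a-1)$ up to permutation within the two blocks, so the number of permutations it produces is $(a-1)!\,(a-1)!\,(n-2a+1)!$, giving probability at most $(a-1)!^2(n-2a+1)!/n!$. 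For $a$ of order $n$ this is exponentially small in $n$, not $\Omega(1/n^2)$; even for small $a$ it is $O(1/n^3)$. Summing an exponentially small gap over $m$ edges does not recover $\Omega(1/m)$.

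The paper avoids this entirely by bounding the \emph{number} of edges without a quantified gap, rather than trying to squeeze a gap out of each one. For the equal-degree case $d(x)=d(y)=(m+1)/2$, all other vertices have degree at most $2$, so (for $m>3$) at most one edge is of this type. For $\abs{d(x)-d(y)}=1$, the paper brings in \lemma{diffeasy}: whenever $\min\{d(x),d(y)\}<m/4$ one gets $p_{n;a,b}(0)\leq\frac{1}{2(n-a-b+1)}<\frac1m-\Omega(1/m^2)$. Since $\sum_x d(x)=2m$, at most eight vertices have degree $\geq m/4$, hence at most $\binom{8}{2}=28$ edges lie between such vertices. Thus all but at most $29$ edges carry an $\Omega(1/m^2)$ gap, and the union bound gives success probability at least $\Omega\bigl((m-29)/m^2\bigr)=\Omega(1/m)$.
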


\begin{proof}For every edge $xy$, we have $\prob{S_x=S_y}=p_{m;d(x)-1,d(y)-1}(0)$. Therefore, by \theorem{onetwo}, \theorem{equal} or \theorem{bigdiff}, we have $\prob{S_x=S_y}\leq 1/m$, and this inequality is strict unless $d(x)=d(y)=3$ and $m=5$. Since a graph with $5$ edges can have at most two vertices of degree $3$, the inequality is strict for at least $m-1$ edges. Thus the labelling is local antimagic with probability at least $1-\sum_{xy\in E(G)}\prob{S_x=S_y}>1-m/m$.

Further, we have $\prob{S_x=S_y}<1/m-\Omega(1/m^2)$ in all cases except where $d(x)=d(y)=\frac{m+1}2$ or $\abs{d(x)-d(y)}=1$. In the former case every vertex other than $x$ and $y$ has degree at most $2$, and so, provided $m>3$, only one edge has this property. In the second case, if $\min\{d(x),d(y)\}<m/4$ we still have $\prob{S_x=S_y}<1/m-\Omega(1/m^2)$, by \lemma{diffeasy}. Since $\sum_xd(x)=2m$, at most eight vertices have degree at least $m/4$, and so at most $28$ edges are between vertices of this type. Combining these facts, we have $\prob{S_x=S_y}<1/m-\Omega(1/m^2)$ for all but at most $29$ edges, and so a random labelling is local antimagic with probability $\Omega\bigl(\frac{m-29}{m^2}\bigr)=\Omega(1/m)$.\end{proof}

We therefore immediately have the following result, since the number of rounds of the algorithm described above is given by a geometric random variable with parameter $p=\Omega(1/m)$ and so the expected number of rounds is $O(m)$. In each round checking whether the labelling is local antimagic can be done in linear time, with one pass through the edges needed to compute all the sums $(S_x)_{x\in V(G)}$ and another pass to check that $S_x\neq S_y$ for each $xy\in E(G)$. Also, a random permutation of $[m]$ can be generated in linear time, as implemented in \cite{Dur64}.
\begin{cor}A local antimagic labelling of $G$ can be found using a random algorithm in expected time $O(m^2)$.\end{cor}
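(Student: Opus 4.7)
The plan is to combine Theorem~\ref{mainresult} with an $O(m)$ bound on the cost of a single round of the algorithm. By Theorem~\ref{mainresult}, each trial independently produces a local antimagic labelling with probability $p=\Omega(1/m)$, so the number of trials until the first success is a geometric random variable with expectation $1/p=O(m)$.

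Next I would account for the work done in a single trial. Generating a uniformly random permutation of $[m]$ takes $O(m)$ time using the Fisher--Yates/Durstenfeld shuffle \cite{Dur64}. Given a candidate labelling $f$, one pass through $E(G)$ computes all the vertex sums: initialise $S_v\leftarrow 0$ for every non-isolated vertex, and for each edge $xy$ add $f(xy)$ to both $S_x$ and $S_y$. We may ignore isolated vertices entirely since they play no role, and since every remaining vertex is incident to at least one edge, the total number of vertices considered is $O(m)$, so initialisation also fits in $O(m)$. A second pass through $E(G)$ then verifies $S_x\neq S_y$ for every edge $xy$. Both passes run in time $O(m)$, so each round costs $O(m)$.

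Multiplying the expected number of rounds by the per-round cost gives total expected running time $O(m)\cdot O(m)=O(m^2)$, as required. There is no real obstacle here: once the success probability $\Omega(1/m)$ is in hand from Theorem~\ref{mainresult}, the corollary reduces to the standard geometric-trials analysis together with well-known linear-time routines for sampling a permutation and scanning the edge set. The only point that deserves a line of justification is that the number of relevant vertices is $O(m)$, so that initialising and reading off the array of vertex sums does not spoil the linear bound per round.
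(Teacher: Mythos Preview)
Your argument is correct and matches the paper's own proof essentially line for line: geometric number of trials with success probability $\Omega(1/m)$ from \theorem{mainresult}, linear-time generation of a random permutation via the Durstenfeld shuffle, and two linear passes over $E(G)$ to compute and check the vertex sums. The only addition you make is the explicit remark that the number of non-isolated vertices is $O(m)$, which the paper leaves implicit.
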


\section{Extensions and open problems}
In this section we discuss some natural extensions, and to what extent the methods of this paper apply.

\subsection{Changing the set of permitted labels}
Here we consider two extensions, suggested by the referee, which involve changing the permitted labels. 
The first is to require that the labels form a permutation not of $[m]$ but of another fixed set of $m$ consecutive positive integers. Many of the methods used to prove Theorem \ref{mainresult} translate directly to this setting, and we detail below the modifications which may be made to complete a proof of this generalisation. However, our methods do rely on the fact that the permitted labels are consecutive when transforming permutations and considering the effect mod $m$; they also rely on the fact that labels are positive so that the sum can only increase when more terms are added, \eg in \lemma{diffeasy}.

Suppose that $\sigma$ is a random permutation of $\{k,\ldots,n+k-1\}$, for some integer $k\geq 1$. The results of \sect{equal} still hold, since subtracting $k-1$ from each label does not change $D_{a,a}(\sigma)$. Inspecting the proof of \lemma{parity}, we see that the bounds also apply to random subsets of $\{k,\ldots,n+k-1\}$. Likewise, the proofs of Lemmas \ref{diffone}, \ref{difftwo}, \ref{diffeasy} and \ref{b1} translate directly to this setting. Lemmas \ref{oddbigdiff} and \ref{evenbigdiff} need slight modification, in that \lemma{oddbigdiff} covers the case $n+k-1$ odd (with $(a-b)(n+k)$ replacing $(a-b)(n+1)$ in the proof) and \lemma{evenbigdiff} the case $n+k-1$ even (with $(a+1-b)(n+k-1)$ replacing $(a+1-b)n$). However, all cases are still covered, and consequently \theorem{bigdiff} still holds. Thus the only obstacle to proving that a neighbour-distinguishing labelling using these labels exists is that the proof of \theorem{onetwo} does not translate to this setting. We therefore need the following result.

\begin{thm}\label{newonetwo}Let $k$ and $n$ be fixed positive integers, and $a$ and $b$ be positive integers such that $b\in\{a-2,a-1\}$ and $a+b<n$. If $\sigma$ is a random permutation of $\{k,\ldots, n+k-1\}$ then $\prob{D_{a,b}(\sigma)=0}<\frac 1n$.\end{thm}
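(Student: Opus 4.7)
My plan is to reduce Theorem~\ref{newonetwo} to a statement about permutations of $[n]$ via the bijection $\tau(i) := \sigma(i) - k + 1$. Since $D_{a,b}(\sigma) = D_{a,b}(\tau) + (a-b)(k-1)$, setting $t := -(a-b)(k-1)$ it suffices to show $\prob{D_{a,b}(\tau) = t} < 1/n$ for a uniformly random $\tau$ on $[n]$. If $t$ lies outside $[\min D_{a,b}, \max D_{a,b}]$ the probability is zero; otherwise every integer in the range is achievable by some $\tau$ (by a standard swap argument), and the range has length $(a+b)n - a^2 - b^2$, which under the hypothesis $n > a+b$ strictly exceeds $n$. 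Hence every residue class modulo $n$ contains at least two achievable values.

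For $b = a-1$, Lemma~\ref{diffone}---which, as the author has already observed, translates to this setting---gives $\prob{D_{a,a-1}(\tau) \equiv t \pmod n} = 1/n$. Since some achievable $v \ne t$ satisfies $v \equiv t \pmod n$, we have $\prob{D = v} > 0$, forcing $\prob{D = t} < 1/n$.

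For $b = a-2$ with $n$ odd the same strategy works: the proof of Lemma~\ref{difftwo} actually yields $\prob{D \equiv t \pmod n} = 1/n$ in this sub-case (the ``each choice of $r$ gives a different residue mod $n$'' part of the argument), and strictness follows from the range argument.

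The delicate case is $b = a-2$ with $n$ even, where Lemma~\ref{difftwo} yields only $\prob{D \equiv t \pmod n} \le 1/(n-1)$. Here I would adapt the involution $\sigma \mapsto \sigma^*$ from the proof of Theorem~\ref{onetwo}. The natural generalisation is the ``constant-minus-permutation'' map $\iota(\tau)(i) \equiv C - \tau(i) \pmod n$ (with values taken in $\{1,\ldots,n\}$) for a constant $C$ satisfying $C \equiv t \pmod{n/2}$, which forces $D(\iota(\tau)) \equiv t \pmod n$ whenever $D(\tau) = t$. Choosing $C$ from one of its two permitted residues (the one nearer to $n$) keeps the ``overflow'' quantity $I_1 - I_2$---counting positions in $[a]$ versus $[a+1, a+b]$ that hold a label of size at least $C$---in a narrow range, which typically forces it to differ from the unique value of $I_1 - I_2$ that would make $D(\iota(\tau)) = t$. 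This yields an injection $\{\tau : D(\tau) = t\} \hookrightarrow \{\tau : D(\tau) \equiv t \pmod n,\ D(\tau)\ne t\}$, and hence $\prob{D = t} \le 1/(2(n-1)) < 1/n$. The main obstacle is handling combinations of $t$ and $a$ for which neither of the two natural choices of $C$ suffices on its own; I expect these to be resolved either by composing the reflection with a cyclic shift of the labels (still an involution, by the standard ``reflect-shift-reflect-shift'' calculation) or by a swap-based involution exchanging pairs of labels differing by $n/2$, whose effect on $D$ is $\pm n$ whenever the paired labels sit on opposite sides of position $a$.
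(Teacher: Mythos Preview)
Your reduction to permutations of $[n]$ with shifted target $t=-(a-b)(k-1)$ is exactly the right starting point, and your treatment of the cases $b=a-1$ and $b=a-2$ with $n$ odd is essentially the paper's: use the exact equality $\prob{D\equiv t\ \mathrm{mod}\ n}=1/n$ from (the proof of) Lemmas~\ref{diffone}--\ref{difftwo}, then exhibit a second value in the residue class of $t$ to force strictness. One small slip: range length $>n$ does \emph{not} in general force every residue class to contain two achievable values; that needs length $\ge 2n-1$. Fortunately your situation is better than you argue, because $t\le 0$ always, so once $t$ is in range, $t+n\le n\le\max D_{a,b}$ is also in range, and the swap argument gives it positive probability. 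The paper sidesteps this by writing down an explicit permutation with $D\equiv 0$ but $D\neq 0$, but your route works too once the justification is repaired.

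The genuine gap is Case~3 ($b=a-2$, $n$ even). Your reflection $\iota(\tau)(i)\equiv C-\tau(i)\pmod n$ does map $D=t$ into $D\equiv t\pmod n$, but the overflow correction is $n$ times a signed count that ranges over many values, and you have not shown it avoids $0$ on any controlled set. You flag this honestly and suggest fixes (compose with a shift, or swap labels differing by $n/2$), but neither is carried through. The paper's argument here is different and worth knowing: it uses the \emph{translation} $\rho^*(i)=\rho(i)\pm n/2$ rather than a reflection, under which $D(\rho^*)-D(\rho)=n\bigl(|A_\rho\cap H_1|-|B_\rho\cap H_1|-1\bigr)$ where $H_1$ is the lower half of the label set. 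This difference vanishes exactly on the set $T=\{\rho:|A_\rho\cap H_1|=|B_\rho\cap H_1|+1\}$, so on $S\setminus T$ the map pairs off permutations with $D=0$ against ones with $D\neq 0$, giving $\prob{D=0}\le\frac12\bigl(\prob{S}+\prob{S\cap T}\bigr)$. The set $T$ is then handled by a cyclic shift \emph{within $H_1$ only}, which shows $\prob{S\cap T}<\frac{2}{n}(1-p_{\text{even}})$, complementing $\prob{S}=\frac{2}{n}p_{\text{even}}$. The key idea you are missing is this decomposition by $T$: your reflection has no analogous ``bad set'' with a clean independent bound, which is why the exceptional cases proliferate.
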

\setcounter{case}{0}
\begin{proof}
We distinguish three cases.
\begin{case}$b=a-1$.\end{case}
In this case the arguments of \lemma{diffone} still give $\prob{D_{a,a-1}(\sigma)\equiv 0\mod n}=\frac 1n$, and so it suffices to show that for some permutation $\sigma$ we have $D_{a,a-1}(\sigma)\equiv 0$ \modn, but $D_{a,a-1}(\sigma)\neq 0$. If $a$ is odd, choose $\sigma(1)\equiv 0$ \modn; if $a$ is even choose $\sigma(1)\equiv -1$ \modn. The remaining integers $\{k,\ldots,n+k-1\}\setminus\sigma(1)$ contain at least $\frac{n-3}2$ disjoint pairs of consecutive integers, and $a-1\leq\frac{n-3}2$. Take $a-1$ such pairs and for each $i\in[a-1]$ assign the integers from a pair to $\sigma(i+1)$ and $\sigma(i+a)$, with the higher integer going to the former if $i$ is odd and the latter otherwise. Then $D_{a,a-1}(\sigma)=\sigma(1)$ if $a$ is even and $D_{a,a-1}(\sigma)=\sigma(1)+1$ if $a$ is odd; in either case we have the required property.
\begin{case}$b=a-2$ and $n$ is odd.\end{case}
In this case the arguments of \lemma{difftwo} give $\prob{D_{a,a-2}(\sigma)\equiv 0\mod n}=\frac 1n$, and so we may proceed as in Case 1. Set $d=0$ if $a$ is even and $d=-1$ if $a$ is odd. Choose $\sigma(1)$ and $\sigma(2)$ such that $\sigma(1)+\sigma(2)\equiv d$ \modn and $\sigma(1)\in\{k,n+k-1\}$. This is always possible, since we may choose $\sigma(1)=k$ unless $2k\equiv d$ \modn, in which case $2(n+k-1)\not\equiv d$. The remaining integers in $\{k,\ldots,n+k-1\}$ contain at least $\frac{n-4}2\geq a-2$ consecutive pairs; we assign these to $\sigma(i+2)$ and $\sigma(i+a)$ as in Case 1.
\begin{case}$b=a-2$ and $n$ is even.\end{case}
Set $H_1=\{k,\ldots,n/2+k-1\}$ and $H_2=\{n/2+k,\ldots,n+k-1\}$. Write $p_{\text{even}}$ for the probability that a random subset of $[n]$ of size $2a-2$ has even sum; note that 
\[\prob{D_{a,a-2}(\sigma)\text{ even}}=\prob{\abs{\{\sigma(1),\ldots,\sigma(2a-2)\}\cap H_1}\text{ even}}=p_{\text{even}}\,.\]
Set $S=\{\sigma:D_{a,a-2}(\sigma)\equiv 0\mod n\}$. By the arguments of \lemma{difftwo} we have 
\begin{equation}\prob{\sigma\in S}=\frac 2np_{\text{even}}\,;\label{inS}\end{equation}
we may assume $p_{\text{even}}\geq\frac12$ since otherwise \eqref{inS} gives the required inequality.

Write $A_{\sigma}=\{\sigma(1),\ldots,\sigma(a)\}$, $B_\sigma=\{\sigma(a+1),\ldots,\sigma(2a-2)\}$ and define $T=\{\sigma:\abs{A_{\sigma}\cap H_1}=\abs{B_{\sigma}\cap H_1}+1\}$.
Note that $\prob{\sigma\in T}<\prob{\abs{\{\sigma(1),\ldots,\sigma(2a-2)\}\cap H_1}\text{ odd}}=1-p_{\text{even}}$. Choose a random $\tau\in T$ and $r\in [n/2]$, and define the permutation $\tau^*$ by setting $\tau^*(i)=\tau(i)$ if $\tau(i)\in H_2$ and $\tau^*(i)$ to be the element of $H_1$ congruent to $\tau(i)+r$ \modn[n/2] if $\tau(i)\in H_1$. Note that $\tau^*$ is a uniformly random element of $T$, and $D_{a,a-2}(\tau^*)\equiv D_{a,a-2}(\tau)+r$ \modn[n/2], so for each $\tau$ at most one choice of $r$ gives $\tau^*\in S$. Thus 
\begin{equation}\prob{\sigma\in S\cap T}\leq\frac 2n\prob{\sigma\in T}<\frac 2n(1-p_{\text{even}})\label{SandT}\,.\end{equation}
Let $\rho$ be a permutation in $S\setminus T$, and consider the mapping $\rho\mapsto\rho^*$ given by 
\[\rho^*(i)=\begin{cases}\rho(i)+n/2&\quad\text{if }\rho(i)\in H_1\\
\rho(i)-n/2&\quad\text{if }\rho(i)\in H_2\,.\end{cases}\]
The map $\rho\mapsto\rho^*$ is a bijection on $S\setminus T$. Since $\rho\not\in T$, we must have 
\[\abs{A_{\rho}\cap H_1}-\abs{B_{\rho}\cap H_1}\neq\abs{A_{\rho}\cap H_2}-\abs{B_{\rho}\cap H_2}\,,\]
and hence $D_{a,a-2}(\rho)\neq D_{a,a-2}(\rho^*)$. Consequently we have, using \eqref{inS} and \eqref{SandT},
\begin{align*}\prob{D_{a,a-2}(\sigma)=0}&\leq\prob{\sigma\in S}-\frac12\prob{\sigma\in S\setminus T}\\
&=\frac12\bigl(\prob{\sigma\in S}+\prob{\sigma\in S\cap T}\bigr)\\
&<\frac12\Bigl(\frac2np_{\text{even}}+\frac2n(1-p_{\text{even}})\Bigr)=\frac1n\,,
\end{align*}
as required.\end{proof}

Combining \theorem{newonetwo} with the observations made above, if the edges of an $m$-edge graph with no isolated edges are labelled with a random permutation of $\{k,\ldots,n+k-1\}$ then every pair of adjacent edges is distinguished with probability at least $1-\frac1m$, and with strictly greater probability except in the case of two adjacent vertices of degree $3$ when $m=5$. Thus we have the following strengthening of the local antimagic conjecture.
\begin{thm}For every graph $G$ with $m$ edges, none of which is isolated, and for any positive integer $k$, the edges of $G$ may be labelled with a permutation of $\{k,\ldots,m+k-1\}$ in such a way that the vertex sums distinguish all pairs of adjacent vertices.\end{thm}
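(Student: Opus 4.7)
The plan is to replicate the probabilistic argument of \theorem{mainresult} in the shifted label setting. I would label the edges of $G$ with a uniformly random permutation $\sigma$ of $\{k,\ldots,m+k-1\}$ and, for each edge $xy$, observe that $S_x=S_y$ if and only if $D_{d(x)-1,d(y)-1}(\sigma)=0$, since the label on $xy$ itself contributes equally to both sums. Thus it suffices to show that $\prob{D_{a,b}(\sigma)=0}\leq 1/m$ for every edge, with strict inequality outside the single exceptional case $m=5$, $a=b=2$; the final step is then a union bound over the edges of $G$.

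The second step is to collect the adapted per-edge bounds. As noted in the paragraph preceding the statement, for $a=b$ the quantity $D_{a,a}(\sigma)$ is unchanged by adding a constant to every label, so \theorem{equal} applies directly and still identifies the $m=5$, $a=2$ exception. For $\abs{a-b}\geq 3$, the derivation of \theorem{bigdiff} goes through with only cosmetic changes: \lemma{parity} together with Lemmas \ref{diffone}, \ref{difftwo}, \ref{diffeasy} and \ref{b1} transfer verbatim, while \lemma{oddbigdiff} and \lemma{evenbigdiff} now split according to the parity of $n+k-1$ rather than of $n$, with $(a-b)(n+1)$ and $(a+1-b)n$ replaced by $(a-b)(n+k)$ and $(a+1-b)(n+k-1)$. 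The only case where the original argument genuinely breaks is the degree-difference one or two case covered by \theorem{onetwo}, whose proof invoked explicit constructions in $[n]$ that do not survive an arbitrary shift of the label set; this is precisely the gap filled by \theorem{newonetwo}.

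Having assembled all the per-edge bounds, I would conclude exactly as at the end of the proof of \theorem{mainresult}: the strict inequality $\prob{S_x=S_y}<1/m$ holds for at least $m-1$ edges, since a graph with $5$ edges contains at most two vertices of degree $3$ and hence at most one edge both of whose endpoints have degree $3$. The union bound over edges then yields $\prob{\text{labelling is local antimagic}}>0$, which is all that the statement requires. The main obstacle throughout this program was \theorem{newonetwo}, whose even-$n$, degree-difference-two case required the genuinely new bijective argument already given; once that is available, the extension of the main theorem is essentially just a careful check that the remaining earlier lemmas still apply in the shifted setting.
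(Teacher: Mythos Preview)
Your proposal is correct and follows essentially the same route as the paper: the theorem is stated immediately after the observations that the equal-degree case is shift-invariant, that Lemmas~\ref{diffone}--\ref{b1} transfer verbatim, that Lemmas~\ref{oddbigdiff} and~\ref{evenbigdiff} adapt by splitting on the parity of $n+k-1$, and that \theorem{newonetwo} fills the remaining gap left by \theorem{onetwo}. The paper then concludes by the same union-bound argument you describe, including the observation about the unique possible exceptional edge when $m=5$.
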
 

A second and much more general extension, by analogy with the well-studied concept of list colouring of graphs introduced by Erd\H{o}s, Rubin and Taylor (\cite{ERT79}), is to give each edge a list of $m$ permitted positive integer weights. We say that a labelling of edges is \textit{feasible} if each edge has a label from its own list, and no label is used more than once. Is there always a feasible labelling for which the vertex sums distinguish all neighbouring pairs?

Since the individual lists need not have any particular structure, the details of our proof certainly do not transfer to this setting. However, there is a more fundamental obstacle to a probabilistic approach, as the lists may intersect in arbitrary ways, meaning that individual labels in a random feasible labelling will typically not be uniformly distributed, and their distributions will vary depending on the lists of other edges. 

However, we conjecture that if these difficulties could be overcome, the basic proof strategy of this paper would be viable in the following precise sense.
\begin{con}\label{lists}Let $G$ be a graph with $m$ edges, none of which is isolated, and assign to each edge a list of $m$ positive integers. Let $f:E(G)\to\mathbb Z^+$ be a uniformly random feasible assignment. Then for every edge $vw\in E(G)$, $\probb{\sum_{e\ni v}f(e)=\sum_{e\ni w}f(e)}<\frac 1m$.\end{con}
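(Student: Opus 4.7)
Fix an edge $vw\in E(G)$ and set $a:=d(v)-1$, $b:=d(w)-1$. Exactly as in Section~3, since the label on $vw$ contributes equally to $S_v$ and $S_w$, the event $\{S_v=S_w\}$ coincides with $\{D(f)=0\}$, where $D(f):=\sum_{e\in U}f(e)-\sum_{e\in U'}f(e)$, $U=\{e\ni v\}\setminus\{vw\}$ and $U'=\{e\ni w\}\setminus\{vw\}$. A uniformly random feasible $f$ is just a uniformly random perfect matching of $E(G)$ in the bipartite ``list graph'' $B$ with parts $E(G)$ and $\bigcup_{e}L(e)$, edges recording list membership. The plan is to carry through the three-case analysis of Section~3, replacing each argument that relies on the symmetry of a uniform permutation of $[m]$ by a corresponding statement about uniform perfect matchings in $B$.

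The fundamental move is a two-edge swap: given $e\in U$ and $e'\in U'$ with $f(e)\in L(e')$ and $f(e')\in L(e)$, exchanging the two labels produces another feasible assignment and changes $D$ by $2\bigl(f(e')-f(e)\bigr)$. These swaps, together with longer alternating-cycle rotations in $B$, should play the role of the position swaps used throughout Section~3. For $|a-b|\geq 3$ I would adapt \lemma{diffeasy} essentially verbatim, since its proof relies only on positivity of labels and a single position swap, both available in the list setting. For $a=b$ I would run an induction on $m$ in which, at each step, one identifies a label very likely to be placed outside $U\cup U'$ and contracts it out, mirroring the split according to whether $n\in\{\tau(1),\ldots,\tau(2a)\}$ in the proof of \theorem{equal}. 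For $|a-b|\in\{1,2\}$ I would replace the global cyclic shift $\sigma(i)\mapsto\sigma(i)+r\pmod n$ of \lemma{diffone} by a shift on the ground set of $B$ restricted to those labels actually appearing in the image of $f$, leaving $D$ invariant modulo the appropriate residue.

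The hard part---and precisely why \conj{lists} remains a conjecture---is that the symmetries underlying Lemmas~\ref{parity}, \ref{diffone}, \ref{difftwo}, \ref{oddbigdiff} and~\ref{evenbigdiff} all act on the entire label set, and none of them preserves feasibility once the lists differ between edges. A direct cyclic shift, or the involution $i\mapsto n+1-i$, will in general send some label outside one of the lists and thus leave the space of feasible labellings. The natural candidate fix is a correlation inequality for uniform transversals of $(L(e))_{e\in E(G)}$: uniform bases of a transversal matroid satisfy the strongly Rayleigh property and hence negative association, which should yield a list-version of \lemma{parity} controlling the parity of $D(f)$. If such residue-class spreading can be secured, the injection arguments of Section~3 should go through with only notational changes. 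Producing this spreading---or, failing that, exhibiting a direct involution on feasible labellings that exchanges the roles of $U$ and $U'$ while respecting individual lists---is what I expect to be the main obstacle.
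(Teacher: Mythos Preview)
This statement is a \emph{conjecture} in the paper, not a theorem: the paper does not prove it and offers no proof to compare against. The only thing the paper establishes is the very special case in which all the edge-lists are pairwise disjoint, via a one-paragraph argument (label the edges one at a time; at the last edge meeting exactly one of $v,w$ there is a unique forbidden value, and with positive probability that value is not in the list).

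Your proposal is not a proof either, and you are candid about this: you explicitly identify the obstruction---that the cyclic shifts, the involution $i\mapsto n+1-i$, and the pairing arguments behind Lemmas~\ref{parity}, \ref{diffone}, \ref{difftwo}, \ref{oddbigdiff}, \ref{evenbigdiff} all fail to preserve feasibility once the lists differ---and then gesture at a possible repair (negative association for transversal matroids) without carrying it out. Even the pieces you describe as going through ``essentially verbatim'' do not: for instance, your adaptation of \lemma{diffeasy} requires a swap between an edge $e\in\{1\}\cup\{a+b+1,\ldots,n\}$ and the first position, but there is no guarantee that $\tau(r)\in L(e_1)$ and $\tau(1)\in L(e_r)$, so the swap need not yield a feasible labelling, and even when some swaps are feasible the resulting conditional distribution is no longer uniform over feasible assignments. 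The same issue undermines the ``two-edge swap'' you propose as the fundamental move throughout. So what you have written is a reasonable description of why the paper's techniques do not extend to arbitrary lists, together with a speculative programme; it is not a proof, and the paper does not claim one.
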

\conj{lists} holds if the lists are disjoint, since then a random feasible assignment simply involves selecting a random permitted label independently for each edge. Randomly labelling each edge in turn, when we reach the last edge meeting exactly one of $v$ and $w$, there is a unique integer $r$ such that assigning $r$ to that edge will give $\sum_{e\ni v}f(e)=\sum_{e\ni w}f(e)$. Further, there is a positive probability that $r$ is not a permitted label for that edge, either because there are more than $m$ possible values of $r$, or because there are at most two edges meeting exactly one of $v$ and $w$ (in which case no labelling with distinct positive integers can fail to distinguish them).

\subsection{Distinguishing more pairs of vertices}
We conclude with the following conjecture, which seems the natural next step from this result towards the antimagic labelling conjecture. 
It is in the same spirit as the concept of distant irregularity strength, introduced in \cite{Prz13} as intermediate between irregularity strength and the 1-2-3 conjecture, where the requirement is to distinguish every pair of vertices with distance at most $r$ but labels may be repeated ad lib.
\begin{con}\label{twostep}For any graph $G$ with $m$ edges, none of which is isolated, there is a labelling using every label from $[m]$ exactly once for which the sums $(S_v)_{v\in V(G)}$ distinguish every pair of vertices $v,w$ with graph distance $d(v,w)\leq 2$.\end{con}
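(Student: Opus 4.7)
The plan is to adapt the probabilistic framework of the paper to the distance-at-most-$2$ distinguishing setting. The first observation is that for any pair $v,w$ with $d(v,w)=2$, the edges incident to $v$ and those incident to $w$ are disjoint (since $vw\notin E(G)$), and therefore $S_v-S_w$ has exactly the same distribution as $D_{a,b}(\sigma)$ for $n=m$, $a=d(v)$, and $b=d(w)$ (the borderline case $a+b=m$ requires a small separate computation since the convention in the paper is $a+b<n$). The bounds already established for $p_{m;a,b}(0)$ in Section~3, together with the adjacent case handled in \theorem{mainresult}, immediately yield per-pair probability estimates for every pair at distance at most~$2$.

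The difficulty is that the number of distance-$2$ pairs can be as large as $\sum_u\binom{d(u)}{2}$, which is $\Theta(m^2)$ in examples such as $K_{2,n}$. Since $p_{m;2,2}(0)$ is of order $1/m$, the naive union bound already fails quite badly on $K_{2,n}$: the expected number of colliding pairs of leaves under a uniformly random permutation is of order $m$. Consequently a new ingredient beyond \theorem{mainresult} is required.

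A reasonable strategy is to treat vertices of very large degree separately. For each vertex $u$ with $d(u)\geq \Delta_0$ (for a threshold $\Delta_0$ to be optimised), the set $\binom{N(u)}{2}$ contributes many distance-$2$ pairs, but the labels on the $d(u)$ edges incident to $u$ form a random $d(u)$-subset of $[m]$ whose pair-sums can be rearranged into a Sidon-like configuration by an explicit local correction. The plan would be: fix a uniformly random permutation; permute the labels on the edges incident to each high-degree vertex so that the pair-sums at all common-neighbour pairs become distinct (in the spirit of the swap arguments of \sect{equal}); and then verify that the resulting labelling still distinguishes all remaining pairs, for which the probabilistic bounds of Section~3 suffice once the number of distance-$2$ events per vertex has been trimmed.

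The main obstacle is to marry this local correction to the global probabilistic argument. For graphs of bounded maximum degree, a permutation-aware version of the Lov\'asz Local Lemma seems likely to apply, since the number of bad events ``adjacent'' in the dependency sense is polynomial in $\Delta$ while the per-event probability is $O(1/m)$; but all known permutation LLL statements require delicate resampling arguments that must be adapted to the sum-distinguishing events here. The hardest case will be graphs dominated by a few vertices of very high degree (of which $K_{2,n}$ is the prototype), where distance-$2$ pairs are heavily concentrated and the local exchange step has to be compatible with the global labelling; constructing such compatible exchanges and controlling their knock-on effect on the distance-$2$ pairs arising at neighbouring vertices is where essentially all of the new work beyond the present paper lies.
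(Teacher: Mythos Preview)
The statement you are attempting to prove is \conj{twostep}, which the paper explicitly leaves as an open conjecture; there is no proof in the paper to compare against. The paper merely states the conjecture and then explains, via the $K_{2,n}$ example, why the union-bound method used for \theorem{mainresult} cannot work here. You have correctly identified exactly this obstruction (and essentially reproduced the paper's $K_{2,n}$ discussion), so on that point you are aligned with the paper.

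However, what you have written is not a proof but a research programme. You yourself say that ``constructing such compatible exchanges and controlling their knock-on effect \ldots\ is where essentially all of the new work beyond the present paper lies.'' That sentence is an admission that the central step is missing. Concretely: the ``local correction'' you propose---permuting the labels on edges incident to a high-degree vertex $u$ so that pair-sums at common-neighbour pairs become distinct---would require those labels to form something close to a Sidon set, and a random $d(u)$-subset of $[m]$ is very far from Sidon when $d(u)$ is large (the expected number of coinciding pair-sums is $\Theta(d(u)^4/m)$). Moreover, any such local rearrangement at $u$ alters $S_v$ for every neighbour $v$ of $u$, potentially destroying distinctions already achieved at other high-degree vertices; you have not indicated any mechanism to prevent these corrections from interfering with one another. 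The Lov\'asz Local Lemma suggestion does not help in the hard regime either, since for $K_{2,n}$ each bad event (a pair of degree-$2$ vertices colliding) depends on essentially all others, and the per-event probability times the dependency degree is $\Theta(1)$, not $o(1)$.

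In short: the paper has no proof of this statement, and neither do you; your proposal is a reasonable outline of where the difficulties lie, but it does not contain a new idea that resolves them.
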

An approach that relies solely on the union bound is not sufficient to prove Conjecture \ref{twostep}. To see this, consider the graph $K_{2,n}$ which has $n$ vertices of degree $2$, no pair of which share an edge but every pair of which have distance $2$, and $2n$ edges. It is easy to see that for a random permutation the probability of two given vertices of degree $2$ having the same sum is $\Theta(1/n)$ (whenever $1\leq a,b,c\leq n/2$, if the edges meeting one vertex have labels $a$ and $a+2b+c$ while those meeting the other vertex have labels $a+b$ and $a+b+c$ then the sums are equal); since there are $\binom n2$ pairs of degree-$2$ vertices, these probabilities sum to $\Theta(n)$.

\acknowledgements
The author was supported by the European Research Council (ERC) under the European Union's Horizon 2020 research and innovation programme (grant agreement No.\ 639046), and thanks the anonymous referee for some helpful comments.

\nocite{*}
\bibliographystyle{abbrvnat}
\bibliography{antimagic}
\label{sec:biblio}

\end{document}